\documentclass[final,leqno,onefignum,onetabnum]{siamltex1213}

\usepackage{amsfonts}
\usepackage{graphicx}
\usepackage{amsopn}
\usepackage{amsmath}
\usepackage{comment}
\usepackage{amssymb}
\usepackage{pgfplots}


\newcommand{\R}{\mathbb{R}}

\newcommand{\ud}{\,{\rm d}}

\newcommand{\ti}{\partial_{t}}
\newcommand{\tr}{\partial_{r}}
\let\cd\cdot
\let\eps\varepsilon

\newcommand{\m}{\sqrt{M}}









\bibliographystyle{elsarticle-num}

\title{Radially symmetric shadow wave solutions to the system of pressureless gas dynamics in arbitrary dimensions}

\author{Marko Nedeljkov\thanks{Department of of Mathematics and Informatics, Faculty of Sciences, University of Novi Sad,
Trg Dositeja Obradovi\'{c}a 4,
21000 Novi Sad,
Serbia (marko@dmi.uns.ac.rs)}
  \and
Lukas Neumann\thanks{Unit of Engineering Mathematics, University of Innsbruck,
Technikerstra\ss e 13, 6020 Innsbruck,
Austria (lukas.neumann@uibk.ac.at)}
  \and
Michael Oberguggenberger\thanks{Unit of Engineering Mathematics, University of Innsbruck,
Technikerstra\ss e 13, 6020 Innsbruck,
Austria, (michael.oberguggenberger@uibk.ac.at)}
 \and
Manas R. Sahoo\thanks{School of Mathematical Sciences,
National Institute of Science Education and Research, Bhubaneswar,
P.O. Jatni, Khurda 752050, Odisha, India
(manas@niser.ac.in)}
}

\begin{document}
\maketitle

\begin{abstract}
Radially symmetric shadow wave solutions to the system of multidimensional pressureless gas dynamics are introduced, which allow one to capture concentration of mass. The transformation to a one-dimensional system with source terms is performed and physically meaningful boundary conditions at the origin are determined. Entropy conditions are derived and applied to single out physical (nonnegative mass) and dissipative (entropic) solutions. A complete solution to the pseudo-Riemann problem with initial data exhibiting a single delta shock on a sphere is obtained.
\end{abstract}

\begin{keywords} Pressureless Euler system, multidimensional conservation laws, radially symmetric solutions, shadow waves, delta shocks\end{keywords}

\begin{AMS}Primary, 35L65; Secondary, 35L67\end{AMS}

\pagestyle{myheadings}
\thispagestyle{plain}
\markboth{M. Nedeljkov, L. Neumann, M. Oberguggenberger, M. R. Sahoo}{Radially symmetric shadow waves}

\hyphenation{bounda-ry rea-so-na-ble be-ha-vior pro-per-ties cha-rac-te-ris-tic}

\section{Introduction}
This paper is devoted to radially symmetric nonclassical solutions to the multidimensional pressureless Euler system
\begin{equation}\label{EulerNd}
\begin{split}
\ti \rho+\nabla\cd\left(\rho \vec u\right)&=0\\
\ti\left(\rho\vec u\right)+\nabla\cd\left(\rho \vec u\otimes \vec u\right)&=0
\end{split}
\end{equation}
which consists of the convection part of the equations of conservation of mass and momentum in isentropic gas dynamics
\cite[Section 3.3]{Chen2003}. System \eqref{EulerNd} also describes the behavior of a gas consisting of sticky particles, i.e. particles that stick together if they collide.
This model and related models (sticky particles, adhesion particle dynamics, with or without viscosity) have a long history in cosmology, see e.g. the representative articles \cite{Shandarin, Vergassola, Weinberg}.

Following the seminal work of \cite{BrGr, E96}, the one-dimensional case has found wide\-spread attention. It has been evident from the beginning that solutions become measure-valued in finite time even for regular initial data. Thus solution concepts have been developed that admit accommodating nonclassical solutions such as delta shocks. We list a selection of results: The notion of duality solutions has been introduced in \cite{BoJa}. Further existence and uniqueness results on measure valued solutions have been obtained by various methods, including the mentioned duality approach \cite{Hu05,HuWang01,NgTu15}, particle approximations \cite{Grenier, NgTu}, zero viscosity limits \cite{Bo}, zero pressure limits \cite{Chen2011}, optimal transport theory \cite{Cavaletti}. For numerical methods, we refer to \cite{Bo94, BoJiLi, BoMa}. Another approach, namely shadow waves, is due to the first author. Shadow waves were introduced for the one-dimensional pressureless Euler system, among other conservation laws, in \cite{Ne10}, and will be the method of choice in this paper.

Concerning nonclassical solutions in the higher dimensional case, not so much seems to be known about system \eqref{EulerNd}, although this case is important in applications. There is work on radially symmetric solutions to related systems of magnetohydrodynamics \cite{Carot, Golovin, Stute, Tsui} and to the isentropic Euler equations \cite{LeFloch07}.
The numerically inspired weak asymptotic method has been applied to various systems of conservation laws, in particular to system \eqref{EulerNd}, in \cite{Colombeau}.
As general references for rotationally symmetric solutions to systems of conservation laws we mention \cite{Frei, Jenssen} and \cite{Hilgers}, the latter reference especially for the behavior at the origin.

In this paper, we address system \eqref{EulerNd} in $n$ space dimensions. The new contributions consist in
(a) extending the notion of shadow waves to the $n$-dimensional rotationally symmetric case; (b) constructing nonclassical radially symmetric solutions to the $n$-dimensional pressureless Euler system, in particular, solutions to the pseudo-Riemann problem, and (c) determining physically meaningful boundary conditions at the origin.
(The term pseudo-Riemann problem refers to initial data which are piecewise multiples of $|\vec x|^{1-n}$ with a jump at $|\vec x| = R > 0$; these arise due to the fact that the radially transformed system contains source terms.)

The plan of exposition is as follows: Section 2 addresses the radially symmetric version of system \eqref{EulerNd} and the appropriate boundary conditions at the origin. In Section 3, we recall the notion of shadow waves, extend it to the radially symmetric, higher dimensional case, and present the basic calculation of the weak limits fixing the structure of the shadow wave. This concerns both the local case (the behavior near the initial jump at $|\vec x| = R$) and the global behavior including the origin $\vec x = 0$.
Section 4 is a brief discussion of the case of solutions with constant wave speed (which will turn out to be the physically meaningful entropy solutions to the pseudo-Riemann problem). Section 5 is devoted to deriving and discussing entropy conditions for shadow wave solutions. Finally, in Section 6 we solve the pseudo-Riemann problem explicitly, obtaining solutions which are physical (i.e., with nonnegative density) and dissipative (i.e., satisfying the entropy condition). Further, these solutions are shown to be locally unique (for small time). In the last subsection we exhibit an example of nonuniqueness, provided by a global, physical solution which violates the entropy condition (and has non-constant wave speed).

\section{The radially symmetric pressureless Euler system}
As physical background, we consider the system of pressureless gas dynamics \eqref{EulerNd} as describing the evolution of the density $\rho(\vec x,t)$ and velocity field
$\vec u(\vec x,t)$ of a gas of sticky particles. Here $t\geq 0$ and $\vec x\in\R^n$; $\rho$ is a scalar quantity and $\vec u(\vec x,t) \in \R^n$.
Initial conditions $\rho(\vec x,0)=\rho_0(\vec x)$ and $\vec u(\vec x,0)=\vec u_0(\vec x)$ are presumed to be given.

Assuming that the initial data are radially symmetric it makes sense to search for radially symmetric solutions---indeed it is obvious that classical solutions are radially symmetric if the initial data are.
Making the ansatz $\rho(\vec x,t)=\rho(r,t)$ and $\vec u(\vec x,t)=u(r,t)\tfrac{\vec x}{|\vec x|}$, system \eqref{EulerNd} is transformed into
\begin{equation}\label{EulerNdRad}
\begin{split}
\ti \rho+\tr\left(\rho u\right)+\frac{n-1}{r}\rho u&=0\\
\ti \left(\rho u\right)+\tr\left(\rho  u^2 \right)+\frac{n-1}{r}\rho u^2&=0\,.
\end{split}
\end{equation}
These equations have to be accompanied by the initial data $\rho_0(r)$, $u_0(r)$. The system is non-strictly hyperbolic.
In principle, radially symmetric solutions can be constructed by using one-dimensional solutions and rescaling the density with the surface area of the corresponding sphere.
However, such an approach is not easily justifiable for nonclassical solutions because---as can be seen from the equations above---for non-smooth solutions also the nonlinear source terms will contain singularities.
Moreover, the transformation to polar coordinates introduces an artificial boundary at $r=0$. We want to mention, however, that for smooth solutions, system (\ref{EulerNdRad}) can be transformed to the one-dimensional pressureless Euler system (on the half line) by setting $u=r^{1-n}\tilde u$. The one-dimensional pressureless Euler system is well known to be equivalent to the Burgers equation, but again only for smooth solutions that in addition have strictly positive densities.
In view of these limitations,  we will not make use of such transformations---apart from maybe implicitly using them for regular parts of the solutions in Section \ref{SecRiem}.


In the absence of pressure, it is possible that mass accumulates at $r=0$. We track the mass accumulating at $r=0$ and time $t$ in the value $m_0(t)$. In order to retain mass conservation we introduce the following boundary conditions
\begin{equation}\label{boundcond}
\begin{aligned}
&\text{for} &&u(0+,t)>0&& \dot m_0(t)=0\,,\ \rho(0+,t)=0 \,,\ u(0+,t) \text{ arbitrary}\\
&\text{for} &&u(0+,t)<0&& \dot m_0(t)=-\lim_{r\searrow 0}r^{n-1}|\mathbb{S}^{n-1}|\rho(r,t)u(r,t)\,,
\end{aligned}
\end{equation}
where $|\mathbb{S}^{n-1}|$ denotes the surface area of the sphere in $n$ dimensions and the quantity $u(0+,t)$ is to be understood, in the usual way, as the right-hand limit $\lim_{r\searrow 0}u(r,t)$.
Note that, as can be seen from the equations, in regions of zero mass the velocity can be prescribed arbitrarily---as one would also expect by physical considerations.
Moreover, due to the pressureless nature of the equation we cannot expect the Dirac delta function forming at zero to evaporate mass:
the mass accumulating at the origin will have zero velocity. This is expressed by the boundary conditions \eqref{boundcond}, and it is consistent with the interpretation as sticky particle system.

The total mass of a solution is given by $m(t)=m_0(t)+\int_0^\infty r^{n-1}|\mathbb{S}^{n-1}|\rho(r,t)\ud r$. The following formal calculation shows that mass conservation holds, at least for sufficiently regular solutions of compact support:
\begin{multline}\label{consmass}
\dot{m}(t)=\dot m_0(t)+\int_0^\infty r^{n-1}|\mathbb{S}^{n-1}|\,\ti \rho(r,t)\ud r\\
=\dot m_0(t)-\int_0^\infty r^{n-1}|\mathbb{S}^{n-1}|\left(\tr\left(\rho u\right)+\frac{n-1}{r}\rho u\right)\ud r\\
=\dot m_0(t)-\int_0^\infty |\mathbb{S}^{n-1}|\,\tr\left(r^{n-1}\rho u\right)\ud r=0\,,
\end{multline}
no matter whether the first or the second case in \eqref{boundcond} arises.

\section{Shadow wave solutions}\label{sect:swsolutions}
Shadow wave solutions are models of shock wave solutions that contain a singularity of the type of a delta function. In the radially symmetric case, they are supported on spheres and concern the density $\rho$. They were introduced for the one-dimensional pressureless Euler system, among other conservation laws, in \cite{Ne10}. Shadow waves are constructed as families of functions that approximate delta shock waves (or---as a matter of fact---other types of singular shocks) in a small $\eps$-neighborhood of the shock location. Outside the $\eps$-neighborhood, they are classical solutions of the system.
As $\eps\to 0$, their structure is enforced by requiring that conservation of mass and momentum holds in the weak sense. In the limit, a Dirac mass might be forming at the shock location. The limit is not viewed as an actual solution of the system in the algebraic sense, but captures the physical essence of conservation of mass and momentum flowing in or out of the singularity.

To deduce the conditions on the shock we first assume that the shock is located at position $R+c(t)>0$, where $c(0)=0$ and $R>0$. (Shadow waves emanating from the origin will not arise in the solution of the pseudo-Riemann problem.)
Using a simplification of one of the approaches from \cite{Ne10}, we make the ansatz
\begin{equation}\label{Ansatz}
U_{sw}^\eps(r,t)=\left(\rho_{sw}^\eps(r,t),u_{sw}^\eps(r,t)\right)=\left\{
 \begin{array}{ll}
  \left(\rho_0,u_0\right), &r-R<c(t)-\tfrac{\eps}{2}\\
  \left(\rho_{\eps},u_{\eps}\right), &c(t)-\tfrac{\eps}{2}<r-R<c(t)+\tfrac{\eps}{2}\\
   \left(\rho_1,u_1\right), &r-R>c(t)+\tfrac{\eps}{2}\,.
 \end{array}
\right.
\end{equation}
Here the singularity is located in the interval $R+c(t)-\epsilon/2,R+c(t)+\epsilon/2$ and travels with speed $\dot c(t)$.
All of the functions $\rho_0$, $\rho_\eps$, $\rho_1$, $u_0,$, $u_\eps$, $u_1$ are assumed to be $C^1$-regular and may depend on $r$ and $t$, in general.
For the moment we assume $R+c(t)>0$, avoiding interaction of the delta wave with the singularity at zero. In other words, we are first interested in the local behavior of shadow waves. The global behavior, including boundary conditions at the origin, will be addressed afterwards.

Before arriving at a formal definition of a shadow wave solution, we dedicate the next paragraphs to studying the behavior of shadow waves for fixed $\eps > 0$ and as $\eps\to 0$. We are guided by the idea that a shadow wave solution should have a singularity in the mass of the type of a Dirac delta function, and at the same time satisfy conservation of mass and momentum, at least in the weak sense as $\eps\to 0$. We make the additional simplification that
\begin{equation}\label{simplification}
   \rho_{\eps} = \frac1{\eps}\sigma(t),\quad u_{\eps} = v(t)
\end{equation}
for some $C^1$-regular functions $\sigma$ and $v$,
and plug the ansatz (\ref{Ansatz}) into the weak formulation of the equation, that is
\begin{align}
\int_0^\infty \int_0^\infty \left[-\rho\ti\phi -\left(\rho u\right)\tr\phi +\frac{n-1}{r}\rho u\phi\right] \ud r\ud t&=0\label{weakmass}\\
\int_0^\infty \int_0^\infty \left[- \left(\rho u\right)\ti\phi-\left(\rho  u^2 \right)\tr \phi+\frac{n-1}{r}\rho u^2\phi\right] \ud r\ud t&=0\label{weakmomentum}
\end{align}
for test functions $\phi\in\mathcal{D}\left(\mathbb{R}\times[0,\infty[\right)$.
We only perform the calculations of the terms in \eqref{weakmass} here and will comment on equation \eqref{weakmomentum} later.

First observe that the following integration by parts rules hold (for $C^1$-functions $f,g$)
\begin{align*}
\iint_{r\leq R+c(t)}\partial_t \left(f(r,t)g(r,t)\right)\ud (r,t)&=-\int_0^\infty(fg)(R+c(t),t)\,\dot c(t)\ud t\\
\iint_{r\leq R+c(t)}\partial_r \left(f(r,t)g(r,t)\right)\ud (r,t)&=\int_0^\infty(fg)(R+c(t),t)\,\ud t\,.
\end{align*}
These can be derived, for example, by using the two-dimensional divergence theorem on the vector fields $(fg,0)^T$ and $(0,fg)^T$, respectively.

Using these formulas in (\ref{weakmass}) at fixed $\eps > 0$ and evaluating the first term yields
\begin{multline*}
-\int\int \rho_{sw}^\eps(r,t)\partial_t\phi(r,t)\ud r\ud t = \int_0^\infty \int_0^{R+c(t)-\epsilon/2}\partial_t \rho_0\phi\, \ud r\ud t\\
+\int_0^\infty \int_{R+c(t)-\epsilon/2}^{R+c(t)+\eps/2}\tfrac1{\eps}\partial_t{\sigma}\phi\,\ud r\ud t+
\int_0^\infty \int_{R+c(t)+\epsilon/2}^\infty \partial_t \rho_1\phi\,\ud r\ud t\\
+\int_0^\infty \Big((\rho_0\phi)(R+c(t)-\tfrac{\eps}{2},t)-\tfrac1{\eps}{\sigma}(t)\phi(R+c(t)-\tfrac{\eps}{2},t)\Big)\dot c(t)\ud t\\
+\int_0^\infty \Big(-(\rho_1\phi)(R+c(t)+\tfrac{\eps}{2},t)+\tfrac1{\eps}{\sigma}(t)\phi(R+c(t)+\tfrac{\eps}{2},t)\Big)\dot c(t)\ud t\,,
\end{multline*}
while the term containing spatial derivatives becomes
\begin{multline*}
-\int\int \left(\rho_{sw}^\eps u_{sw}^\eps\right)(r,t)\partial_r\phi(r,t)\ud r\ud t\\
=\int_0^\infty \int_0^{R+c(t)-\epsilon/2}\partial_r\left(\rho_0 u_0\right)\phi\, \ud r\ud t+
\int_0^\infty \int_{R+c(t)+\epsilon/2}^\infty \partial_r\left(\rho_1 u_1\right)\phi\,\ud r\ud t\\
-\int_0^\infty \Big((\rho_0 u_0\phi)(R+c(t)-\tfrac{\eps}{2},t)-\tfrac1{\eps}{\sigma}(t)v(t)\phi(R+c(t)-\tfrac{\eps}{2},t)\Big)\ud t\\
-\int_0^\infty \Big(-(\rho_1 u_1\phi)(R+c(t)+\tfrac{\eps}{2},t)+\tfrac1{\eps}{\sigma}(t)v(t)\phi(R+c(t)+\tfrac{\eps}{2},t)\Big)\ud t\,.
\end{multline*}
We use Taylor expansion of the test function $\phi$ in $r=R+c(t)$,
\begin{align*}
 \phi(R+c(t)-\tfrac{\eps}{2},t)&=\phi(R+c(t),t)-\tfrac{\eps}{2}\partial_r\phi(R+c(t),t)+\mathcal{O}(\eps^2)\,,\\
 \phi(R+c(t)+\tfrac{\eps}{2},t)&=\phi(R+c(t),t)+\tfrac{\eps}{2}\partial_r\phi(R+c(t),t)+\mathcal{O}(\eps^2)\,,\\
 \phi(r,t)&=\phi(R+c(t),t)+\mathcal{O}(\eps),\  R+c(t)-\tfrac{\eps}{2} < r < R+c(t)+\tfrac{\eps}{2}
\end{align*}
to calculate the contributions from the boundary integrals as well as the contribution from integrating about the singularity.
We find
\begin{multline*}
 -\int\int \rho_{sw}^\eps(r,t)\partial_t\phi(r,t)\ud r\ud t=\int_0^\infty \int_0^{R+c(t)-\epsilon/2}\partial_t \rho_0\phi\, \ud r\ud t\\
+\int_0^\infty \partial_t \sigma(t)\phi(R+c(t),t)\ud t+\int_0^\infty \int_{R+c(t)+\epsilon/2}^\infty \partial_t \rho_1\phi\,\ud r\ud t\\
+\int_0^\infty \Big(\rho_0(R+c(t)-\tfrac{\eps}{2},t)-\rho_1(R+c(t)+\tfrac{\eps}{2},t)\Big)\phi(R+c(t),t)\dot c(t)\ud t\\
+\int_0^\infty \sigma(t)\partial_r\phi(R+c(t),t)\dot c(t)\ud t+\mathcal{O}(\eps)
\end{multline*}
and
\begin{multline*}
-\int\int \left(\rho_{sw}^\eps u_{sw}^\eps\right)(r,t)\partial_r\phi(r,t)\ud r\ud t\\
=\int_0^\infty \int_0^{R+c(t)-\epsilon/2}\partial_r\left(\rho_0 u_0\right)\phi\, \ud r\ud t
+\int_0^\infty \int_{R+c(t)+\epsilon/2}^\infty \partial_r\left(\rho_1 u_1\right)\phi\,\ud r\ud t\\
+\int_0^\infty \Big(-(\rho_0 u_0)(R+c(t)-\tfrac{\eps}{2},t)+(\rho_1 u_1)(R+c(t)+\tfrac{\eps}{2},t)\Big)\phi(R+c(t),t)\ud t\\
-\int_0^\infty \sigma(t)v(t)\partial_r\phi(R+c(t),t)\ud t+\mathcal{O}(\eps)\,.
\end{multline*}
Concerning the source term, we note that
\begin{align*}
   \lim_{\eps\to 0} \int_0^\infty \int_{R+c(t)-\epsilon/2}^{R+c(t)+\eps/2} \tfrac{n-1}{r\eps}&\sigma(t)v(t)\phi(r,t)\ud r\ud t\\
      &= \int_0^\infty \tfrac{n-1}{R+c(t)}\sigma(t)v(t)\phi(R+c(t),t)\ud t\,.
\end{align*}
From the integral equation \eqref{weakmass} and the subsequent calculations we obtain the following conditions as $\eps\rightarrow0$:
\\
First, from the double integrals, that is terms away from the front, we find that
 \begin{equation*}
 \begin{aligned}
 &(\rho_0,u_0)\text{ is a solution to }\eqref{weakmass} \text{ in }\\
 &(\rho_1,u_1)\text{ is a solution to }\eqref{weakmass} \text{ in }
  \end{aligned}
 \begin{aligned}
 &]0,R+c(t)[\times ]0,\infty[\\
 &]R+c(t),\infty[\times ]0,\infty[
  \end{aligned}
 \end{equation*}
Second, balancing the single integrals, that is terms with support near the front, and using that the terms multiplying $\phi$ and $\partial_r\phi$ have to satisfy the equation independently, we get
\begin{align}
   -\dot c(\rho_1-\rho_0)+(\rho_1 u_1-\rho_0 u_0)+\left(\partial_t\sigma+\tfrac{n-1}{R+c(t)}\sigma v \right) &= 0\ \ \text{and} \label{swcond1}\\
   \dot c(t) \sigma(t) - \sigma(t) v(t) &= 0\,. \label{swcondvelo1}
  \end{align}
We do not perform the calculations for equation \eqref{weakmomentum} explicitly here, but along the same lines one finds that
$(\rho_0,u_0)$ resp. $(\rho_1,u_1)$ have to be solutions to \eqref{weakmomentum} in the corresponding regions and
\begin{align}
   -\dot c(\rho_1 u_1-\rho_0 u_0)+(\rho_1 u_1^2-\rho_0 u_0^2)+
   \left(\partial_t(\sigma v)+\tfrac{n-1}{R+c(t)}\sigma v^2 \right) &= 0\,,\label{swcond2}\\
   \dot c(t) \sigma(t) v(t) - \sigma(t) v^2(t) &= 0\,. \label{swcondvelo2}
\end{align}
We introduce the standard notation for the jump in density $[\rho]=\rho_1-\rho_0$ and momentum $[\rho u]=\rho_1 u_1-\rho_0 u_0$ and use the abbreviations
\begin{equation}\label{kappa12}
\begin{array}{l}
 \kappa_1 :=\, \dot c[\rho]-[\rho u]\\[4pt]
 \kappa_2 :=\, \dot c[\rho u]-[\rho u^2]
 \end{array}
\end{equation}
where it is understood that $\kappa_1$ and $\kappa_2$ are functions of $R+c(t)$.
Note that $\kappa_1$ is the mass flowing into the shadow wave.
The first part is the net influx due to the movement of the shadow wave and the second part corresponds to the net flux by differences in velocity and density to the left and right of the front.
Similarly $\kappa_2$ can be interpreted as the net momentum flux into the shadow wave.

With these notations, equations \eqref{swcond1} and \eqref{swcond2} can be written as the differential equations
\begin{align}
\partial_t\sigma+\tfrac{n-1}{R+c(t)}\sigma v  &= \kappa_1\,, \label{swdiff1}\\
    \partial_t(\sigma v)+\tfrac{n-1}{R+c(t)}\sigma v^2 &= \kappa_2\,.\label{swdiff2}
\end{align}
Further, \eqref{swcondvelo1} implies \eqref{swcondvelo2} and is satisfied if $v(t) = \dot{c}(t)$ or $\sigma(t) = 0$.

Summarizing the calculations above, we arrive at the following assertion.

\begin{proposition}\label{swprop}
A family of functions  $U_{sw}^\eps$ of the form  \eqref{Ansatz} satisfies system \eqref{EulerNdRad} in the weak limit as $\eps\to 0$ if and only if
\begin{itemize}
\item[(1)] $(\rho_0,u_0)$ is a weak solution to \eqref{EulerNdRad} in $]0,R+c(t)[\times ]0,\infty[$,
\item[(2)] $(\rho_1,u_1)$ is a weak solution to \eqref{EulerNdRad} in $]R+c(t),\infty[\times ]0,\infty[$ and
\item[(3)] $c$, $\sigma$ and $v$ are related through the differential equations  \eqref{swdiff1}, \eqref{swdiff2}.
\end{itemize}
Further, if $\sigma(t) \neq 0$ on some time interval, then necessarily $v(t) \equiv \dot{c}(t)$ there.
\end{proposition}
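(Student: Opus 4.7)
The bulk of the work is already carried out in the paragraphs preceding the proposition: substitution of the ansatz (\ref{Ansatz}) together with the simplification (\ref{simplification}) into the weak mass equation (\ref{weakmass}) and the weak momentum equation (\ref{weakmomentum}), splitting each integral over the three $\eps$-strips prescribed by the ansatz, applying the divergence-theorem identities along the moving curves $r = R + c(t) \pm \eps/2$, Taylor-expanding $\phi$ at $r = R + c(t)$, and passing to the limit $\eps \to 0$. My plan is therefore to organize this material into a single clean limiting identity and then separate it into mutually independent conditions, thereby proving the ``only if'' direction, and finally to reverse the chain for the ``if'' direction.

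Concretely, after passage to the limit the mass identity decomposes into three types of contributions that must vanish separately: (i) bulk integrals of $\phi$ against $\ti\rho_i + \tr(\rho_i u_i) + \tfrac{n-1}{r}\rho_i u_i$ over the two outer regions for $i = 0,1$, (ii) a line integral along the front of $\phi(R+c(t),t)$ against the coefficient $\kappa_1 - \ti\sigma - \tfrac{n-1}{R+c}\sigma v$, and (iii) a line integral of $\tr\phi(R+c(t),t)$ against $\sigma(\dot c - v)$. Since $\phi(R+c(t),t)$ and $\tr\phi(R+c(t),t)$ can be prescribed as arbitrary $C^\infty_c$ functions of $t$ independently of each other and of the values of $\phi$ away from the front (by multiplying any such function by a fixed bump in $r - R - c(t)$ whose value and derivative at $0$ are freely chosen), each of (i), (ii), (iii) must vanish identically. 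This produces (1), the jump-source balance (\ref{swcond1}) equivalent to (\ref{swdiff1}), and the algebraic identity (\ref{swcondvelo1}); an analogous treatment of (\ref{weakmomentum}) produces (2), (\ref{swcond2}) equivalent to (\ref{swdiff2}), and (\ref{swcondvelo2}).

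For the final assertion, (\ref{swcondvelo1}) reads $\sigma(t)(\dot c(t) - v(t)) = 0$, so on any interval where $\sigma$ does not vanish we must have $v \equiv \dot c$; on such an interval (\ref{swcondvelo2}) is then automatic. The converse implication is handled by reversing the above chain of identities: assuming (1)--(3) together with $v = \dot c$ wherever $\sigma \neq 0$, the same term-by-term computation shows that both weak identities are satisfied modulo $O(\eps)$, and hence vanish in the limit for every test function. The main technical point I expect to require some care is the independence argument for $\phi$ and $\tr\phi$ along the front, which should be stated explicitly so that the separation of the three contributions above is rigorously justified; the $O(\eps)$ Taylor bookkeeping and the limit of the source integral are routine under the $C^1$ regularity assumption on the outer profiles.
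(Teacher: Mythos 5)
Your proposal is correct and follows essentially the same route as the paper: the proposition is obtained by summarizing the preceding computation, splitting the limiting weak identities into bulk terms, front terms multiplying $\phi$, and front terms multiplying $\partial_r\phi$, which yield conditions (1)--(2), the differential equations \eqref{swdiff1}--\eqref{swdiff2}, and the algebraic relation $\sigma(\dot c - v)=0$, respectively. Your explicit justification that $\phi$ and $\partial_r\phi$ can be prescribed independently along the front is a welcome (if minor) sharpening of the paper's assertion that these terms ``have to satisfy the equation independently,'' but it does not change the argument.
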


\begin{definition}\label{def:sw}
(a) A family of functions $U_{sw}^\eps$ of the type introduced in \eqref{Ansatz} is called a \emph{local shadow wave (SW) solution} for system \eqref{EulerNdRad}, if the conditions (1), (2) and (3) of Proposition \eqref{swprop} are satisfied and $v(t) \equiv \dot{c}(t)$.\\
(b) A local shadow wave is called \emph{physical} if the mass in the shadow wave $\sigma(t)$ is nonnegative on its interval of existence.
\end{definition}

{\sc Remark.} (a) Let us briefly comment on the special case  $\sigma(t) = 0$. Due to \eqref{swdiff1}, \eqref{swdiff2}, this case can only arise for $t$ in some time interval if $\kappa_1 = 0$ and $\kappa_2 = 0$ on that time interval. But $\kappa_1 = \kappa_2 = 0$ are just the Rankine-Hugoniot conditions for piecewise differentiable solutions. Together with $\sigma = 0$ this means that the limit of $U_{sw}^\eps$ is a shock wave solution of system  \eqref{EulerNdRad}. In addition, a simple calculation using \eqref{kappa12} shows that  $\kappa_1 = \kappa_2 = 0$ implies that $\rho_0 =0$ or $\rho_1 = 0$ or $u_0 = u_1$. In these three cases, the resulting shock speeds are easily calculated to be $\dot{c} = u_1$, $\dot{c} = u_0$ and $\dot{c} = u_0 = u_1$, respectively.

(b) In ansatz \eqref{Ansatz} one could admit intermediate functions $(\rho_\eps, u_{\eps})$ of a more general form than \eqref{simplification}. As long as
$\rho_\eps=\mathcal{O}(\eps^{-1})$, $u_\eps = \mathcal{O}(1)$ and sufficiently strong uniformity conditions (in $\eps$, $r$ and $t$) are satisfied, the same calculations as above would lead to the existence of the limits $\lim_{\eps\rightarrow 0}\eps \rho_\eps(R+c(t),t)=\sigma(t)$, $\lim_{\eps\rightarrow 0}u_\eps(R+c(t),t)=v(t)$ where $\sigma$ and $v$ satisfy the conditions of Proposition \eqref{swprop}. Thus qualitatively no new solutions are gained by this generalization. Also, the  \emph{two-sided} shadow waves with different values of
$(\rho_\eps, u_{\eps})$  to the left and right of the curve $r = R+c(t)$ will not lead to qualitatively different shadow wave solutions in the context of pressureless gas dynamics.

(c) Here and in the remainder of this section we denote the regular, piecewise $C^1$-part of the shadow wave by $\rho(r,t)$ and $u(r,t)$, that is
\begin{equation}\label{regpart}
(\rho(r,t), u(r,t)) = \left\{\begin{array}{ll}
                                  (\rho_0(r,t), u_0(r,t)), & r < R + c(t)\\
                                  (\rho_1(r,t), u_1(r,t)), & r > R + c(t).
				\end{array}\right.
\end{equation}
As $\eps\to 0$, the local shadow wave solution of equation~\eqref{EulerNd} converges to a delta shock wave of the form
 \begin{equation*}
 \begin{split}
  \rho_{\rm lim}(\vec x,t)&=\rho(|\vec x|,t)+m_0(t)\delta_{\vec x=0}+|\mathbb{S}^{n-1}|\sigma(t)\delta_{|\vec x|=R+c(t)}\ \  \text{and}\\
  \vec u_{\rm lim}(\vec x,t)&=u(|\vec x|,t)\tfrac{\vec x}{|\vec x|}\ ,\ \text{for}\  |\vec{x}| > 0\,.
 \end{split}
 \end{equation*}
Indeed, the assertion is clear for the regular parts. The singular part is handled by the following calculation:
\begin{multline}\label{swdeltacalc}
 \int_{\mathbb{S}^{n-1}}\int_{R+c(t)-\eps/2}^{R+c(t)+\eps/2} \tfrac1{\eps}\sigma(t)\,r^{n-1}\ud r\phi(\vec\omega r)\ud \vec \omega \\
 = \int_{\mathbb{S}^{n-1}}\int_{-1/2}^{1/2} \sigma(t)(\eps r+R+c(t))^{n-1}\phi(\vec\omega (\eps r+R+c(t)))\ud r)\ud \vec\omega\\
 \longrightarrow |\mathbb{S}^{n-1}|\sigma(t)(R+c(t))^{n-1}\phi(\vec\omega (R+c(t)))=\\
   = |\mathbb{S}^{n-1}|\sigma(t)(R+c(t))^{n-1}\langle \delta_{|\vec x|=R+c(t)},\phi\rangle\,.
\end{multline}

We now turn to the global case including $r=0$ and state what we mean by a radially symmetric shadow wave solution, specifying the required behavior at the origin.
\begin{definition}\label{def:radialSW}
(a) A triple $\left(\rho_{sw}^\eps(r,t),u_{sw}^\eps(r,t),m_0(t)\right)$ is called a \emph{radially symmetric shadow wave solution} of system~\eqref{EulerNd}, if
\begin{itemize}
\item[(1)] away from the origin, $\left(\rho_{sw}^\eps(r,t),u_{sw}^\eps(r,t)\right)$ is a local shadow wave in the sense of Definition~\ref{def:sw};
\item[(2)] the limits $\lim_{r\searrow 0}u(r,t)$ and $\lim_{r\searrow 0}\rho(r,t)r^{n-1}$ exist, except possibly at a discrete set of times $t$;
\item[(3)] the boundary conditions \eqref{boundcond} hold.
\end{itemize}
Moreover, in case there exists $t_0>0$ such that $R+c(t_0)=0$, it is required that $m_0(t_0+)=m_0(t_0-)+\lim_{t\nearrow t_0}\sigma(t)\left(R+c(t)\right)^{n-1}|\mathbb{S}^{n-1}|$\,.

(b) If system~\eqref{EulerNd} is supplemented by initial data $\left(\rho^0(r),u^0(r)\right)$, locally integrable on $\{r \geq 0\}$, a radially symmetric shadow wave solution is required to satisfy
$\lim_{\eps\to 0} \left(\rho_{sw}^\eps(\cdot,t),u_{sw}^\eps(\cdot,t)\right) = \left(\rho^0(r),u^0(r)\right)$ in the sense of distributions on $\{r > 0\}$, and $m(0+)= 0$.
\end{definition}

{\sc Remark.}
(a) Conservation of mass: Let  $\left(\rho_{sw}^\eps(r,t),u_{sw}^\eps(r,t),m_0(t)\right)$ be a radially symmetric shadow wave solution of system~\eqref{EulerNd} with compact support in the space variable, and let $\rho$ be the regular part \eqref{regpart} of its distributional limit.
Then the quantity
\begin{equation}\label{totalmass}
 Q(t)= m_0(t) + \int_0^{\infty} r^{n-1} \lvert S^{n-1}\rvert {\rho}(r,t)\ud r
     +\lvert S^{n-1}\rvert(R+c(t))^{n-1}\sigma(t)
\end{equation}
is conserved in the evolution. Here the first term
\[
    m_0(t) = -\int_{0}^{t} \displaystyle {\lim_{r\searrow 0} (\lvert S^{n-1} \rvert r^{n-1}{\rho}(r,s)u(r,s))}\ud s
\]
captures the mass accumulating at the origin, where the limit in the integrand is understood to be zero when the first case of the boundary condition \eqref{boundcond} occurs. The second term in $Q(t)$ describes the mass of the regular part of the solution, while the third term depicts the mass concentrating along the location of the singular part of the shadow wave.

Clearly, conservation of mass was one of the basic ingredients in deriving the structure of the shadow wave. Nevertheless, it is instructive to check that the resulting distributional limit obeys mass conservation, indeed.
We write
\begin{equation*}
 \begin{aligned}
 Q(t)\ =\ & m_0(t) + \int_0^{R+c(t)} r^{n-1} \lvert S^{n-1}\rvert {\rho}(r,t)\ud r
    +\int_{R+c(t)}^{\infty} r^{n-1} \lvert S^{n-1}\rvert{\rho}(r,t)\ud r\\
   & +\ \lvert S^{n-1}\rvert(R+c(t))^{n-1}\sigma(t)\,.
 \end{aligned}
 \end{equation*}
Differentiating $ Q(t)$ with respect to $t$ yields
\begin{equation*}
 \begin{aligned}
 \dot{Q}(t)\ =\ & \dot{m}_0(t) + \int_0^{R+c(t)} r^{n-1} \lvert S^{n-1}\rvert\ti {\rho}(r,t)\ud r
   +\int_{R+c(t)}^{\infty} r^{n-1} \lvert S^{n-1}\rvert\ti{\rho}(r,t)\ud r\\
 &-\ (R+c(t))^{n-1}\lvert S^{n-1}\rvert [\rho]\dot{c}(t)\\[2pt]
 &+\ (n-1)\lvert S^{n-1}\rvert(R+c(t))^{n-2} \sigma(t)\dot{c}(t) + \lvert S^{n-1}\rvert (R+c(t))^{n-1}\dot{\sigma}(t)\,.
 \end{aligned}
 \end{equation*}
Using the fact that the regular part satisfies \eqref{EulerNdRad} we get, as in \eqref{consmass}, that
\begin{equation*}
 \begin{aligned}
 \dot{Q}(t)\ =\
 & \dot{m}_0(t) - \int_0^{R+c(t)}  \lvert S^{n-1}\rvert  \partial_{r}(r^{n-1} \rho u) \ud r
    - \int_{R+c(t)}^{\infty}  \lvert S^{n-1}\rvert \partial_{r}(r^{n-1} \rho u) \ud r\\
 &-\ (R+c(t))^{n-1}\lvert S^{n-1}\rvert [\rho]\dot{c}(t)\\
 &+\ (n-1)\lvert S^{n-1}\rvert(R+c(t))^{n-2} \sigma(t)\dot{c}(t) + \lvert S^{n-1}\rvert \dot{\sigma}(t)(R+c(t))^{n-1}\\[2pt]
 =\ &   \lvert S^{n-1}\rvert (R+c(t))^{n-1} [\rho u]
 - (R+c(t))^{n-1}\lvert S^{n-1}\rvert [\rho]\dot{c}(t)\\
 &+\ (n-1)\lvert S^{n-1}\rvert(R+c(t))^{n-2} \sigma(t)\dot{c}(t) + \lvert S^{n-1}\rvert \dot{\sigma}(t)(R+c(t))^{n-1}\\
 =\ &   \lvert S^{n-1}\rvert (R+c(t))^{n-1}\Big( [\rho u]
 -[\rho]\dot{c}(t)
 +\frac{n-1}{R+c(t)} \sigma(t)\dot{c}(t) +\dot{\sigma}(t)\Big)=0.
 \end{aligned}
 \end{equation*}

(b) Conservation of momentum: In a similar way, one can show that the evolution conserves the quantity
\begin{equation*}
 \begin{aligned}
 M(t)=&-\int_{0}^{t} \displaystyle {\lim_{r\searrow 0} (\lvert S^{n-1} \rvert r^{n-1}{\rho}(r,s)u^2(r,s))}\ud s + \int_0^{\infty} r^{n-1} \lvert S^{n-1}\rvert {\rho}(r,t)u(r,t)\ud r\\
 &+\lvert S^{n-1}\rvert(R+c(t))^{n-1}\sigma(t)\dot{c}(t).
 \end{aligned}
 \end{equation*}

\section{Existence of shadow wave solutions with constant wave speed}\label{sec:constspeed}
Recall from the definition of a shadow wave that the mass term $\sigma(t)$ and the velocity $c(t)$ must satisfy
\begin{align}
   \partial_t \sigma+\tfrac{n-1}{R+c(t)}\dot c \sigma&= \kappa_1=\dot c[\rho]-[\rho u]\label{equ:sw:def1}\\
  \partial_t\left(\dot c \sigma\right)+\tfrac{n-1}{R+c(t)}\dot c^2 \sigma&= \kappa_2=\dot c[\rho u]-[\rho u^2]\,,\label{equ:sw:def2}
  \end{align}
where $\kappa_1$ and $\kappa_2$ are evaluated at $r = R+c(t)$.
Using the product rule in equation~\eqref{equ:sw:def2} and inserting the result in \eqref{equ:sw:def1} leads to
  \begin{equation}\label{equ:sw:def22}
  \sigma \ddot c+\dot c\kappa_1=\kappa_2\,.
  \end{equation}
In this section we consider the case when $\ddot c=0$, that is the wave speed is constant, $\dot c(t)=v_0=const$, and $c(t)=tv_0$. Equations \eqref{equ:sw:def1} and \eqref{equ:sw:def22} become
\begin{align*}
  \partial_t \sigma+\tfrac{n-1}{R+v_0t}v_0 \sigma&= \kappa_1=v_0[\rho]-[\rho u]\\
 v_0\kappa_1&=\kappa_2=v_0[\rho u]-[\rho u^2]\,.
\end{align*}
We impose the initial condition $\sigma(0) = 0$, corresponding to the generic case that there is no concentration of mass initially.

Let us for now assume that $\kappa_2\neq0$ and thus $v_0$ as well as $\kappa_1$ have to be different from $0$. The case of a standing shadow wave shall be considered later.
The homogenous solution of the first equation is given by $\sigma(t)=const\cdot (R+v_0t)^{1-n}$ where the constant has to be nonnegative for physical shadow waves.
To solve the inhomogeneous problem we use variation of constants and the fact that for constant $\dot c$ we have $\kappa_1=\kappa_1(R+v_0t)$.
This results in
\begin{equation}\label{SWMass}
  \begin{array}{rcl}
 \sigma(t) &=&  \displaystyle\int_{0}^{t}\kappa_1(R+v_0s)\big(R+v_0s\big)^{n-1}\ud s\cdot\big(R+v_0t\big)^{1-n}\\[8pt]
           & = &\displaystyle\frac{1}{v_0}\int_{R}^{R+v_0 t}\kappa_1(r)r^{n-1}\ud r\cdot\big(R+v_0t\big)^{1-n}\,.
 \end{array}
\end{equation}
This solution satisfies the initial condition $\sigma(0)=0$.
To find $v_0$ we have to solve the quadratic equation $v_0\kappa_1=\kappa_2$. (Note that $\kappa_1$ and $\kappa_2$ may be functions of $r = R + c(t)$, but
$\kappa_2/\kappa_1$ must be constant.) For $[\rho]\neq 0$ we obtain the solutions
\begin{equation}\label{ConstWS:speeds}
 _1v_0=\frac{u_1\sqrt{\rho_1}+u_0\sqrt{\rho_0}}{\sqrt{\rho_1}+\sqrt {\rho_0}}\quad \text{or}\quad _2v_0=\frac{u_1\sqrt{\rho_1}-u_0\sqrt{\rho_0}}{\sqrt{\rho_1}-\sqrt {\rho_0}}\,.
\end{equation}
For $[\rho]=0$, there is the single solution $v_0 = (u_0+u_1)/2$ which coincides with $ _1v_0$. We will later see that the second root $_2v_0$ never leads to an entropy solution and thus physical shadow waves have the unique propagation velocity $_1v_0$.

\begin{proposition}\label{ConstWS:exist1}
Assume that $v_0$ is one of the (constant) roots given in \eqref{ConstWS:speeds} and that $\kappa_2\neq 0$.
Then a shadow wave solution of system \eqref{EulerNdRad} is given by
\begin{equation}\label{SwconstV}
 \big(\rho_{sw}^\eps(r,t),u_{sw}^\eps(r,t)\big)=\left\{
   \begin{aligned}
                      &(\rho_0, u_0)\\
         &\Big(\tfrac{1}{\eps v_0}\int_{R}^{R+v_0 t}\kappa_1(r)r^{n-1}\ud r\cdot\big(R+v_0t\big)^{1-n}, v_0\Big)\\
                      &(\rho_1,u_1)
   \end{aligned}\right.
\end{equation}
with the three expressions on the right-hand side valid in the regions $r < R+v_0t-\frac{\eps}2$,
$R+v_0t-\frac{\eps}2 < r < R+v_0t+\frac{\eps}2$ and
$r > R+v_0t+\frac{\eps}2$, respectively. Further, $(\rho_0,u_0)$ and $(\rho_1,u_1)$ are classical solutions in the corresponding regions.
\end{proposition}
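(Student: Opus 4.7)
The plan is to verify that the ansatz in \eqref{SwconstV} meets all three requirements of Definition~\ref{def:sw} together with the matching condition $v(t)\equiv\dot c(t)$, using Proposition~\ref{swprop} as the criterion. Since $(\rho_0,u_0)$ and $(\rho_1,u_1)$ are assumed classical (hence weak) solutions of \eqref{EulerNdRad} in their respective regions, conditions~(1) and (2) of Proposition~\ref{swprop} are immediate. Setting $c(t)=v_0t$ gives $\dot c(t)=v_0$; reading off the middle slab in \eqref{SwconstV} gives $u_\eps=v_0$, so the matching condition $v(t)\equiv\dot c(t)=v_0$ is built into the ansatz. What remains is to check that $\sigma(t)$ defined by \eqref{SWMass} satisfies both of the ODEs \eqref{swdiff1} and \eqref{swdiff2}, as well as the initial condition $\sigma(0)=0$.

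For \eqref{swdiff1}, I would observe that with $v=\dot c=v_0$ and $c(t)=v_0t$, the equation reduces to the linear first-order ODE
\begin{equation*}
\partial_t\sigma+\frac{n-1}{R+v_0t}\,v_0\,\sigma=\kappa_1(R+v_0t).
\end{equation*}
The integrating factor is $(R+v_0t)^{n-1}$, and variation of constants together with $\sigma(0)=0$ produces exactly the formula \eqref{SWMass}, up to the change of variables $r=R+v_0s$. This is a one-line computation confirming that the stated $\sigma$ is the unique solution of \eqref{swdiff1} with $\sigma(0)=0$.

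For \eqref{swdiff2}, I would use the product rule together with $\dot v=0$:
\begin{equation*}
\partial_t(\sigma v)+\frac{n-1}{R+v_0t}\sigma v^2
=v_0\!\left(\partial_t\sigma+\frac{n-1}{R+v_0t}v_0\sigma\right)=v_0\kappa_1.
\end{equation*}
Since $v_0$ is chosen as a root of $v_0\kappa_1=\kappa_2$, the right-hand side equals $\kappa_2$, so \eqref{swdiff2} holds automatically. Thus condition~(3) of Proposition~\ref{swprop} and the additional requirement $v\equiv\dot c$ are both verified, and the triple $(c,\sigma,v)$ assembles into a local shadow wave in the sense of Definition~\ref{def:sw}.

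The main conceptual point — rather than an obstacle — is making sure the algebraic relation $v_0\kappa_1=\kappa_2$ holds \emph{as an identity in $t$} along the shock trajectory, not merely at a single instant. This requires interpreting $\kappa_1,\kappa_2$ as the functions of $r=R+v_0t$ specified in \eqref{kappa12}; since the hypothesis of the proposition is precisely that $v_0$ is a constant root of this equation, the identity propagates in $t$ without further work. Everything else is routine: the regular parts are classical solutions by assumption, the ODE \eqref{swdiff1} is solved by construction, \eqref{swdiff2} follows from \eqref{swdiff1} plus the defining relation of $v_0$, and $v\equiv\dot c$ holds trivially.
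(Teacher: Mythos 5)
Your proposal is correct and follows essentially the same route as the paper: the paper likewise obtains \eqref{SWMass} by solving the linear ODE \eqref{swdiff1} with integrating factor $(R+v_0t)^{n-1}$ and initial condition $\sigma(0)=0$, and reduces \eqref{swdiff2} to the algebraic relation $v_0\kappa_1=\kappa_2$ defining the roots \eqref{ConstWS:speeds}. Your closing remark about $v_0\kappa_1=\kappa_2$ holding as an identity along the trajectory matches the paper's parenthetical observation that $\kappa_2/\kappa_1$ must be constant even though $\kappa_1,\kappa_2$ may depend on $r=R+c(t)$.
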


Now we turn to the case $\kappa_2=0$.
In this case either $\kappa_1=0$ or $v_0=0$. As noted in the remark after Definition \ref{def:sw}, the case $\kappa_1 = \kappa_2$ leads to $\sigma(t)\equiv 0$ and to a shock wave rather than a shadow wave with a delta function part. So we are left with the case $v_0=\kappa_2=0$.
Then by the definition of $\kappa_1$ and $\kappa_2$ we have $\kappa_1=-[\rho u]$ and $0=\kappa_2=-[\rho u^2]$. Further, $\partial_t \sigma=\kappa_1$ and we arrive at the following result:
\begin{proposition}\label{ConstWS:exist2}
Assume that  $\kappa_2=0$. Then a shadow wave solution of system \eqref{EulerNdRad} is given by
\begin{equation}\label{Swrest}
\rho_{sw}^\eps(r,t)=\left\{
   \begin{aligned}
                      &\rho_0\\
                      &-\tfrac{t}{\eps}[\rho u]\\
                      &\rho_1
   \end{aligned}\right.\, ,\quad
  u_{sw}^\eps(r,t)=\left\{
   \begin{aligned}
                      &u_0\quad&&r<R-\tfrac{\eps}2\\
                      &0&&R-\tfrac{\eps}2<r<R+\tfrac{\eps}2\\
                      &u_1&&r>R+\tfrac{\eps}2\, .
   \end{aligned}\right.
\end{equation}
\end{proposition}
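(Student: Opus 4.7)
The plan is to verify directly that the family $(\rho_{sw}^\eps, u_{sw}^\eps)$ defined by \eqref{Swrest}, together with $c(t)\equiv 0$, intermediate mass $\sigma(t) = -t[\rho u]$ and intermediate velocity $v(t) \equiv 0$, fulfils the three conditions of Proposition \ref{swprop}, so that \eqref{Swrest} qualifies as a local shadow wave in the sense of Definition \ref{def:sw}.

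First I would read off the data. Since $v_0 = 0$ we have $c(t) \equiv 0$, hence $\dot c \equiv 0$, and the compatibility $v(t) \equiv \dot c(t)$ required in Proposition \ref{swprop} is automatic as both sides are zero. Plugging $\dot c = 0$ into \eqref{kappa12} gives $\kappa_1 = -[\rho u]$ and $\kappa_2 = -[\rho u^2]$, so the standing hypothesis $\kappa_2 = 0$ translates into the compatibility relation $[\rho u^2] = 0$ across the jump at $r = R$.

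Next I would check the two scalar ODEs \eqref{swdiff1}, \eqref{swdiff2}. With $v \equiv 0$ the first one collapses to $\partial_t \sigma = \kappa_1$, which is immediate from $\sigma(t) = -t[\rho u] = t\kappa_1$, while \eqref{swdiff2} collapses to $0 = \kappa_2$, which is exactly the hypothesis. Conditions (1) and (2) of Proposition \ref{swprop} are inherited from the ansatz: $(\rho_0, u_0)$ and $(\rho_1, u_1)$ are taken to be classical (hence weak) solutions of \eqref{EulerNdRad} in the regions $]0,R[\times]0,\infty[$ and $]R,\infty[\times]0,\infty[$, respectively.

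There is no substantive obstacle; the argument is essentially a direct substitution. The one point worth flagging is the distinction from the Rankine-Hugoniot branch $\kappa_1 = \kappa_2 = 0$ highlighted in the remark after Definition \ref{def:sw}: the standing shadow wave is genuinely new, as opposed to a plain shock, precisely when $[\rho u] \neq 0$, and in that case the concentrated mass grows linearly in time as $\sigma(t) = -t[\rho u]$.
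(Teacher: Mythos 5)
Your proposal is correct and follows essentially the same route as the paper: the paper derives the standing wave by noting that $\kappa_2=0$ with $\kappa_1\neq 0$ forces $v_0=0$, whence $\kappa_1=-[\rho u]$ and $\partial_t\sigma=\kappa_1$ integrates to $\sigma(t)=-t[\rho u]$, while you verify the same identities by direct substitution into \eqref{swdiff1}--\eqref{swdiff2}. Your added remarks (that $\kappa_2=0$ with $\dot c=0$ means $[\rho u^2]=0$, and that the wave is genuinely singular only when $[\rho u]\neq 0$) match the paper's surrounding discussion.
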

Note that there exists no shadow wave solution of constant wave speed if $\kappa_2\neq 0$ but $\kappa_1=0$. However, this is the non-physical case of momentum flowing into the singularity without mass flux.

\section {Admissibility conditions and overcompressibility}

Entropy conditions and---in the case of delta shocks---overcompressibility are commonly used to single out physically meaningful solutions. This section starts with a formal derivation of entropy inequalities (for sufficiently smooth solutions). Requiring that the same inequalities hold in the weak limit will produce the notion of a \emph{dissipative shadow wave solution}.

\subsection{The kinetic energy as entropy functional}
Considering the decay of kinetic energy in the formation of the shadow wave the total kinetic energy appears as a reasonable entropy functional for the pressureless Euler dynamics. The kinetic energy is given by
\[
 E(\rho,\vec u):=\tfrac{1}{2}\rho|\vec u|^2\,.
\]
In order to establish entropy fluxes we rewrite the system in canonical form. Setting $m_i=\rho u_i$ for $i\in\{1,2,3\}$ and $U=(\rho,m_1,m_2,m_3)$ we can rewrite, following \cite[Section 7.2]{Da},
equation \eqref{EulerNd} in the canonical form
\[
 \partial_t U+\nabla_{\vec x} F(U)=0
\]
where $F$ is defined as
\[
 F\big(\rho,m_1,m_2,m_3\big)=
   \begin{pmatrix}
    m_1+m_2+m_3\\[2pt]\tfrac{1}{\rho}(m_1^2+m_1m_2+m_1m_3)\\[4pt]\tfrac{1}{\rho}(m_2m_1+m_2^2+m_2m_3)\\[4pt]\tfrac{1}{\rho}(m_3m_1+m_3m_2+m_3^2)
    \end{pmatrix}\,.
\]
Now we can split $F$ by setting $F=\sum_{j=1}^3 G_j$, where
\[
G_j\big(\rho,m_1,m_2,m_3\big) = \tfrac{m_j}{\rho}
   \begin{pmatrix}
    \rho\\m_1\\m_2\\ m_3
    \end{pmatrix}\,.
\]
The integrability conditions for the existence of an entropy--entropy-flux pair with entropy $E$ then can be expressed \cite[Section 7.4]{Da} as
\begin{equation}\label{eq:entr:integrcond}
\mathrm{D}^2E\,\mathrm{D}G_j=\mathrm{D}G_j^T\,\mathrm{D}^2E\,.
\end{equation}
We only check this condition for $j=1$, the other relations can be checked in a similar fashion. We have
\[
\mathrm{D}^2E=\mathrm{D}^2 \left(\tfrac{|\vec m|^2}{2\rho}\right) =
 \begin{pmatrix}
 \tfrac{m_1^2+m_2^2+m_3^2}{\rho^3} & -\tfrac{m_1}{\rho^2} & -\tfrac{m_2}{\rho^2} & -\tfrac{m_3}{\rho^2}\\[4pt]
 -\tfrac{m_1}{\rho^2} & \tfrac{1}{\rho} & 0 & 0\\[4pt]
 -\tfrac{m_2}{\rho^2} & 0 &\tfrac{1}{\rho} & 0\\[4pt]
 -\tfrac{m_3}{\rho^2} & 0 & 0 & \tfrac{1}{\rho}
 \end{pmatrix}
  \,\text{ and}
\]
\[
  \mathrm{D}G_1=
 \begin{pmatrix}
 0 & 1 & 0 & 0\\[1pt]
 -\tfrac{m_1^2}{\rho^2} & 2\tfrac{m_1}{\rho} & 0 & 0\\[4pt]
 -\tfrac{m_1m_2}{\rho^2} & \tfrac{m_2}{\rho} & \tfrac{m_1}{\rho} & 0\\[4pt]
 -\tfrac{m_1m_3}{\rho^2} & \tfrac{m_3}{\rho} & 0 & \tfrac{m_1}{\rho}
 \end{pmatrix}\,.
\]
Condition \eqref{eq:entr:integrcond} can then be verified by calculating the matrix products.
To calculate the entropy fluxes $Q_j$ we have to solve
\[
 \mathrm{D}Q_j=\mathrm{D}E\,\mathrm{D}G_j\,,
\]
and thus, for $j=1$, the system of equations
\begin{align*}
 \partial_\rho Q_1&=-\tfrac{m_1^2}{\rho^3}(m_1+m_2+m_3)\\
 \partial_{m_1} Q_1&=-\tfrac{1}{\rho^2}(3m_1^2+m_2^2+m_3^2)\\
 \partial_{m_2} Q_1&=-\tfrac{m_1m_2}{\rho^2}\\
 \partial_{m_3} Q_1&=\tfrac{m_1m_3}{\rho^2}\ .
\end{align*}
This system is, up to constants uniquely, solved by $Q_1=\tfrac{m_1}{2\rho^2}|\vec m|^2$.
Performing the same calculation for the other components of the flux, or using symmetry considerations, one can derive
\[
 Q_j=\tfrac{m_j}{2\rho^2}|\vec m|^2\,,
\]
and thus the entropy inequality
\begin{equation}\label{ineq:entr}
 \partial_t\left(\tfrac{|\vec m|^2}{2\rho}\right)+\sum_{j=1}^3\partial_{x_j}\left(\tfrac{m_j|\vec m|^2}{2\rho^2}\right)\leq 0\,.
\end{equation}
In the original variables this amounts to
\[
  \partial_t\left(\tfrac{1}{2}\rho|\vec u|^2\right)+\mbox{div}_{\vec x}\left(\vec u\,\tfrac{1}{2}\rho|\vec u|^2\right)\leq 0\,,
\]
and for radially symmetric solutions
\begin{equation}\label{ineq:entr:rad}
 \partial_t\left(\tfrac{1}{2}\rho u^2\right)+\partial_r\left(\tfrac{1}{2}\rho u^3\right)+\tfrac{n-1}{2r}\rho u^3\leq 0\,.
\end{equation}
\begin{definition}\label{def:diss}
Radially symmetric local shadow waves satisfying the entropy condition \eqref{ineq:entr:rad} in the weak limit, i.e.
\begin{equation*}
 \lim_{\eps\to 0}\Big(\partial_t\big(\tfrac{1}{2}\rho_{sw}^\eps (u_{sw}^\eps)^2\big)+\partial_r\big(\tfrac{1}{2}\rho_{sw}^\eps (u_{sw}^\eps)^3\big)+\tfrac{n-1}{2r}\rho_{sw}^\eps (u_{sw}^\eps)^3\Big)\leq 0
\end{equation*}
(when applied to nonnegative test functions) will be called \emph{dissipative shadow waves}.
\end{definition}

\begin{proposition}\label{SWentrop}
A local shadow wave solution in the sense of Definition \ref{def:sw} to system \eqref{EulerNdRad} is dissipative if and only if
\begin{equation}\label{EntrCondkappa}
\kappa_1(u_0u_1-\dot c^2)\leq \kappa_2(u_0+u_1-2 \dot c)\,,
\end{equation}
or equivalently,
\begin{equation}\label{EntrCondGeneral}
-\dot c^3[\rho]+3\dot c^2[\rho u]-3\dot c[\rho u^2]+[\rho u^3]\leq0\,.
\end{equation}
\end{proposition}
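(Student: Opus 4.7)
The plan is to test the shadow wave entropy production against a nonnegative $\phi\in\mathcal{D}\bigl(\mathbb{R}\times[0,\infty[\bigr)$ and extract the limit as $\eps\to 0$, exactly mirroring the derivation of \eqref{swcond1}--\eqref{swcondvelo2}. Away from the shock, $(\rho_i,u_i)$, $i=0,1$, are $C^1$ solutions of \eqref{EulerNdRad}, so the momentum equation yields $\partial_t u_i+u_i\partial_r u_i=0$ where $\rho_i>0$, and a direct rearrangement gives
\[
\partial_t\bigl(\tfrac12\rho u^2\bigr)+\partial_r\bigl(\tfrac12\rho u^3\bigr)+\tfrac{n-1}{2r}\rho u^3=\tfrac12 u^2\bigl(\partial_t\rho+\partial_r(\rho u)+\tfrac{n-1}{r}\rho u\bigr)+\rho u\bigl(\partial_t u+u\partial_r u\bigr)=0,
\]
so the double integrals over $r<R+c(t)$ and $r>R+c(t)$ contribute nothing in the interior. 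Integrating by parts with the formulas displayed before \eqref{weakmass} produces only the jump $\int_0^\infty\bigl(-\tfrac{\dot c}{2}[\rho u^2]+\tfrac12[\rho u^3]\bigr)\phi(R+c(t),t)\,dt$ along the shock curve.

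The next step is the shock strip $|r-R-c(t)|<\eps/2$, where $\rho_{sw}^\eps=\sigma(t)/\eps$ and $u_{sw}^\eps=\dot c(t)$ by Definition~\ref{def:sw}. Applying the same divergence-theorem identities together with the Taylor expansions of $\phi$ around $r=R+c(t)$ used in the derivation preceding \eqref{swcond1}, I expect to find a coefficient of $\phi(R+c(t),t)$ equal to $\tfrac12\partial_t(\sigma\dot c^2)+\tfrac{n-1}{2(R+c(t))}\sigma\dot c^3$ and a coefficient of $\partial_r\phi(R+c(t),t)$ equal to $\tfrac12\sigma\dot c^3-\tfrac12(\sigma\dot c)\dot c^2=0$. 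The crucial cancellation at the $\partial_r\phi$ level is just the identity $v=\dot c$ built into a local shadow wave, paralleling the way \eqref{swcondvelo1} trivialized the corresponding term for mass. Since the limiting distribution is therefore a Radon measure supported on the curve $r=R+c(t)$, Definition~\ref{def:diss} is equivalent to the pointwise requirement
\[
-\dot c[\rho u^2]+[\rho u^3]+\partial_t(\sigma\dot c^2)+\tfrac{n-1}{R+c(t)}\sigma\dot c^3\leq 0.
\]

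Finally I would rewrite this inequality using the ODEs already at our disposal. Expand $\partial_t(\sigma\dot c^2)+\tfrac{n-1}{R+c}\sigma\dot c^3=\dot c\bigl(\partial_t(\sigma\dot c)+\tfrac{n-1}{R+c}\sigma\dot c^2\bigr)+\sigma\dot c\,\ddot c$, replace the bracket by $\kappa_2$ via \eqref{swdiff2}, and insert $\sigma\ddot c=\kappa_2-\dot c\kappa_1$ from \eqref{equ:sw:def22}, giving
\[
-\dot c[\rho u^2]+[\rho u^3]+2\dot c\kappa_2-\dot c^2\kappa_1\leq 0.
\]
Substituting \eqref{kappa12} and collecting powers of $\dot c$ yields \eqref{EntrCondGeneral}. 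Form \eqref{EntrCondkappa} then follows from the purely algebraic identity
\[
-\dot c^3[\rho]+3\dot c^2[\rho u]-3\dot c[\rho u^2]+[\rho u^3]=\kappa_1(u_0u_1-\dot c^2)-\kappa_2(u_0+u_1-2\dot c),
\]
which I would verify by inserting the elementary jump relations $[\rho u^2]=(u_0+u_1)[\rho u]-u_0u_1[\rho]$ and $[\rho u^3]=(u_0+u_1)[\rho u^2]-u_0u_1[\rho u]$ on the right-hand side. The main obstacle is the careful bookkeeping of the moving-interface boundary terms and Taylor remainders; once the first-order cancellation driven by $v=\dot c$ is isolated, everything else is algebra, and reversing the chain shows the equivalence in both directions.
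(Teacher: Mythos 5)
Your proposal is correct and follows essentially the same route as the paper: the regular parts satisfy the entropy identity with equality, the jump and delta-strip contributions yield the distributional inequality $-\dot c[\rho u^2]+[\rho u^3]+\partial_t(\sigma\dot c^2)+\tfrac{n-1}{R+c}\sigma\dot c^3\leq 0$, and the ODEs \eqref{swdiff2} and \eqref{equ:sw:def22} together with the elementary jump identities reduce this to \eqref{EntrCondkappa}/\eqref{EntrCondGeneral}. The only difference is cosmetic: you reach \eqref{EntrCondGeneral} first and then pass to \eqref{EntrCondkappa}, whereas the paper does the reverse, first rewriting $-\dot c[\rho u^2]+[\rho u^3]$ as $-(u_0+u_1)\kappa_2+u_0u_1\kappa_1$.
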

\begin{proof}
First, algebraic transformations in system \eqref{EulerNdRad} immediately show that differentiable solutions satisfy the entropy condition \eqref{ineq:entr:rad} (actually with equality). Thus the regular parts $(\rho_0,u_0)$, $(\rho_1,u_1)$ of the shadow wave satisfy \eqref{ineq:entr:rad} in their respective domain. A calculation similar to the one leading to Proposition  \eqref{swprop} shows that
\begin{equation*}
-\dot c(t) [\rho u^2]+[\rho u^3]+ \partial_t\left(\sigma v^2\right)+\tfrac{n-1}{R + c(t)}\sigma v^3\leq 0\,;
\end{equation*}
here and in the sequel the jump terms are evaluated at $r = R + c(t)$.
For the first part a straightforward algebraic calculation yields
\begin{align*}
 -\dot c[\rho u^2]+[\rho u^3]=&-(u_0+u_1)\left(\dot c[\rho u]-[\rho u^2]\right)+u_0u_1\left(\dot c[\rho]-[\rho u]\right)\\
 =&-(u_0+u_1)\kappa_2+u_0u_1\kappa_1\,.
\end{align*}
Recalling that $v = \dot c$, we infer the entropy condition
\begin{equation*}
  \partial_t\left(\sigma \dot c(t)^2\right)+\tfrac{n-1}{R-c(t)}\sigma \dot c(t)^3\leq(u_0+u_1)\kappa_2-u_0u_1\kappa_1\,.
\end{equation*}
Using \eqref{equ:sw:def2} and applying the chain rule we can simplify this further to derive
\[
\dot c\kappa_2+\dot c\ddot c\sigma \leq(u_0+u_1)\kappa_2-u_0u_1\kappa_1\,.
\]
Invoking \eqref{equ:sw:def22} leads to the desired inequality \eqref{EntrCondkappa}. The second equivalent condition \eqref{EntrCondGeneral} follows by plugging in the definitions of $\kappa_1$ and $\kappa_2$.
\end{proof}

\subsection{The case of constant wave speed}
In this subsection we generally assume that either $\kappa_1$ or $\kappa_2$ is nonzero. (The case $\kappa_1 = \kappa_2 = 0$ was briefly discussed after Definition \ref{def:sw} and leads to a shock wave solution.) In particular, this also means that $\rho_0$ and $\rho_1$ cannot be zero simultaneously.
Applying \eqref{EntrCondkappa} to the special solutions of constant wave speed $\dot c = v_0$ and recalling that $\kappa_2=v_0\kappa_1$, we find the simpler form
\begin{equation}\label{EntrCondkappaCons}
 u_0u_1\kappa_1\leq (u_0+u_1-v_0)\kappa_2\,.
\end{equation}
\begin{proposition}\label{SWChoiceOfRoot}
The shadow wave solution of constant wave speed $v_0$ given in Proposition \ref{ConstWS:exist1} is physical and dissipative if and only if
\begin{equation}\label{Speed:entrSW}
 v_0=\,_1v_0=\frac{u_1\sqrt{\rho_1}+u_0\sqrt{\rho_0}}{\sqrt{\rho_1}+\sqrt {\rho_0}}\,.
\end{equation}
Further, if $\rho_0 > 0$ and $\rho_1 > 0$, then necessarily $u_0\geq u_1$ and the dissipativity condition \eqref{EntrCondkappaCons} is equivalent to the \emph{overcompressibility condition}
\begin{equation}\label{overcomp}
u_0\geq v_0 \geq u_1\,.
\end{equation}
\end{proposition}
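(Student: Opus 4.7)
The plan is to reduce the dissipativity condition \eqref{EntrCondkappa} to a clean factored form under constant wave speed, read off the sign of $\kappa_1$ forced by physicality through the explicit formula \eqref{SWMass}, and then evaluate both conditions on each of the two candidate roots ${}_1v_0$ and ${}_2v_0$.

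First, since $\kappa_2=v_0\kappa_1$ under constant wave speed, substituting into \eqref{EntrCondkappa} and rearranging produces the equivalent factored inequality
\[
\kappa_1\,(u_0-v_0)(u_1-v_0)\le 0.
\]
On the physicality side, a short Taylor expansion of the integrand in \eqref{SWMass} near $t=0$ gives $\sigma(t)=\kappa_1(R)\,t+\mathcal{O}(t^2)$, regardless of the sign of $v_0$, so $\sigma\ge 0$ on the interval of existence forces $\kappa_1(R)\ge 0$.

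Next, writing $s_i=\sqrt{\rho_i}$ and using $\rho_1-\rho_0=(s_1-s_0)(s_1+s_0)$, a direct computation of $\kappa_1=v_0[\rho]-[\rho u]$ on each root gives
\[
\kappa_1=\sqrt{\rho_0\rho_1}\,(u_0-u_1)\ \text{for } v_0={}_1v_0,\qquad
\kappa_1=-\sqrt{\rho_0\rho_1}\,(u_0-u_1)\ \text{for } v_0={}_2v_0,
\]
together with the identities
\[
u_0-{}_1v_0=\tfrac{s_1(u_0-u_1)}{s_0+s_1},\qquad u_1-{}_1v_0=-\tfrac{s_0(u_0-u_1)}{s_0+s_1},
\]
and the analogous expressions for ${}_2v_0$ (with $s_0+s_1$ replaced by $s_1-s_0$ and appropriate sign changes). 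Inserting these into the factored dissipativity inequality, both roots reduce the entropy condition to $(u_0-u_1)^3\ge 0$, i.e.\ $u_0\ge u_1$.

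The dichotomy is then settled by physicality. For ${}_2v_0$, the condition $\kappa_1\ge 0$ demands $u_0\le u_1$, which is compatible with dissipativity only when $u_0=u_1$; but then $\kappa_1=\kappa_2=0$ and, by the remark following Definition \ref{def:sw}, the construction degenerates to an ordinary shock rather than a delta shadow wave. For ${}_1v_0$ both constraints collapse to the single requirement $u_0\ge u_1$, giving the first assertion. Finally, the explicit identities above show that, under $\rho_0,\rho_1>0$, the factors $u_0-{}_1v_0$ and $u_1-{}_1v_0$ always carry opposite signs, so $u_0\ge u_1$ is equivalent to $u_0\ge v_0\ge u_1$, which is the overcompressibility condition \eqref{overcomp}. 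The main obstacle is not conceptual but rather the careful bookkeeping of signs in the two parallel root calculations; once the algebraic identities above are in hand, the conclusion is immediate.
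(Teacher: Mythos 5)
Your argument is correct and shares the paper's overall skeleton---reduce dissipativity under $\kappa_2=v_0\kappa_1$ to the factored inequality $\kappa_1(u_0-v_0)(u_1-v_0)\le 0$ and use physicality to fix the sign of $\kappa_1$---but you exclude the second root by a genuinely different mechanism. The paper divides by the positive quantity $v_0/\kappa_2$ to get $(u_0-v_0)(u_1-v_0)\le 0$, i.e.\ $v_0\in\bigl[\min\{u_0,u_1\},\max\{u_0,u_1\}\bigr]$, rules out the combination $u_0-v_0<0$, $u_1-v_0>0$ via $\kappa_1=\rho_0(u_0-v_0)-\rho_1(u_1-v_0)\ge 0$, and then invokes a separate lemma showing that ${}_2v_0$ never lies in that interval unless $u_0=u_1$. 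You instead evaluate everything on each root explicitly: your identities $\kappa_1=\pm\sqrt{\rho_0\rho_1}\,(u_0-u_1)$ and $\kappa_1(u_0-v_0)(u_1-v_0)=-\rho_0\rho_1(u_0-u_1)^3/(\sqrt{\rho_0}\pm\sqrt{\rho_1})^2$ check out, so dissipativity gives $u_0\ge u_1$ for \emph{both} roots, while physicality gives $u_0\ge u_1$ for ${}_1v_0$ but $u_0\le u_1$ for ${}_2v_0$; the contradiction eliminates ${}_2v_0$ except when $u_0=u_1$, where the roots coincide and the wave degenerates to a shock. This route buys a transparent explanation of why physicality (not dissipativity alone) selects the root, dispenses with the auxiliary lemma, and absorbs the standing-wave case $\kappa_2=0$, $v_0=0$ into the same factored inequality, which the paper handles in a separate paragraph. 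Two points where the paper is more careful and you should add a line: (i) your computation tacitly assumes $\rho_0,\rho_1>0$; the vacuum cases $\rho_0=0$ or $\rho_1=0$ (where the quadratic degenerates, $v_0=u_1$ resp.\ $v_0=u_0$, and formula \eqref{Speed:entrSW} still holds) need to be recorded for the full ``if and only if''. (ii) Your Taylor expansion of \eqref{SWMass} only yields $\kappa_1(R)\ge 0$; to run the sign argument for all $t$ one must note, as the paper does equally briefly, that $\kappa_1$ keeps a fixed sign along the front. Neither point affects the substance.
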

\begin{proof}
\emph{The case $\kappa_2\neq0$:} In this case we have that $\kappa_1$ and $v_0$ are different from zero.
Thus $\kappa_1$ cannot change sign, and the special form of the solution \eqref{SwconstV} implies that $\kappa_1$ has to be positive
for the density $\rho$ to be nonnegative. (Note that depending on the sign of $v_0$, $R+v_0t$ is less or bigger than $R$.)
At any rate, $v_0$ and $\kappa_2$ have the same sign.
Using $\kappa_1=\kappa_2/v_0$ in the entropy condition we find
\[
 u_0u_1\tfrac{\kappa_2}{v_0}\leq (u_0+u_1-v_0)\kappa_2
\]
and upon multiplying by the positive quantity $v_0/\kappa_2$ and factorizing we conclude
\begin{equation}\label{prodcond}
 (u_0-v_0)(u_1-v_0)\leq0\,.
\end{equation}
Thus for a shadow wave to be physical (Definition~\ref{def:sw}) and dissipative (Definition~\ref{def:diss}) it is necessary that
\begin{equation}\label{entr:v:incl}
 \min\{u_0,u_1\}\leq v_0\leq \max\{u_0,u_1\}\,.
\end{equation}
Using the fact that $\kappa_1\geq 0$ equivalently yields
\begin{equation}\label{ruleoutineq}
  0\leq\kappa_1=v_0(\rho_1-\rho_0)-\rho_1u_1+\rho_0u_0=\rho_0(u_0-v_0) - \rho_1(u_1 - v_0)\,.
\end{equation}
Analyzing \eqref{prodcond} further, we see that the combination $u_0 - v_0 < 0$, $u_1 - v_0 > 0$ is ruled out by the nonnegativity of $\rho_0$ and $\rho_1$ and inequality \eqref{ruleoutineq}. The remaining possibilities are
\begin{itemize}
\item[(i)] $\rho_0 = 0$, $\rho_1 > 0$, $v_0 = u_1$\,;
\item[(ii)] $\rho_0 > 0$, $\rho_1 = 0$, $v_0 = u_0$\,;
\item[(iii)] $\rho_0 > 0$, $\rho_1 > 0$, $u_0-v_0\geq 0$ and $u_1 - v_0\leq 0$\,.
\end{itemize}
Case (iii) implies that $u_0\geq u_1$ and then the inequality $u_0\geq v_0 \geq u_1$ is equivalent with \eqref{prodcond} and in turn with \eqref{EntrCondkappaCons}.
As derived in Section \ref{sec:constspeed}, $v_0$ must be one of the roots \eqref{ConstWS:speeds}. The first root $v_0=\,_1v_0$ satisfies the inequality $u_0\geq v_0 \geq u_1$, as can be seen by direct calculation. The second root $_2v_0$ is ruled out by \eqref{entr:v:incl} and the lemma below (unless $u_0 = u_1$, in which case, however, $v_0 =\,_1v_0 =\,_2v_0$).
Thus the proposition is proven in case (iii).

In cases (i) and (ii), $v_0$ is also given by the formula \eqref{Speed:entrSW}.

\emph{The case $\kappa_2\equiv 0$:}
Here the entropy condition simplifies to $u_0u_1\kappa_1\leq0$. By our basic assumption in this section, $\kappa_1\neq 0$. Thus we must have that $v_0 = 0$, and it follows that $\kappa_1=-[\rho u]$ and
\begin{equation}\label{ineq:entr:case2}
u_0u_1(-[\rho u])\leq 0\,.
\end{equation}
Since the density in the shadow wave is given by $\sigma=-t[\rho u]$ we must assume that $[\rho u]\leq 0$ to obtain physical solutions. This together with \eqref{ineq:entr:case2}
yields that the signs of $u_0$ and $u_1$ have to be different. In addition, $-[\rho u]=-\rho_1u_1+\rho_0u_0\geq 0$. This rules out that $u_0 < 0$ and $u_1 > 0$ and we are again left with the three cases
\begin{itemize}
\item[(i)] $\rho_0 = 0$, $\rho_1 > 0$, $v_0 = 0 = u_1$;
\item[(ii)] $\rho_0 > 0$, $\rho_1 = 0$, $v_0 = 0 = u_0$;
\item[(iii)] $\rho_0 > 0$, $\rho_1 > 0$, $u_0\geq 0$ and $u_1\leq 0$.
\end{itemize}
In case (iii) we arrive at overcompressibility:
\begin{equation}\label{entr:Swrest}
u_0\geq v_0 = 0 \geq u_1\,.
\end{equation}
Finally, the equation $\kappa_2 = u_1^2\rho_1-u_0^2\rho_0=0$ together with \eqref{entr:Swrest} shows that in all cases $v_0 =\,_1v_0=0$. \end{proof}

\begin{lemma}
For all choices of real numbers $u_0 \neq u_1$ and $\rho_0 > 0$, $\rho_1 > 0$,  $\rho_0 \neq \rho_1$, we have
\begin{equation*}
 _2v_0=\frac{u_1\sqrt{\rho_1}-u_0\sqrt{\rho_0}}{\sqrt{\rho_1}-\sqrt {\rho_0}}\ \notin\ \big[\min\{u_0,u_1\}, \max\{u_0,u_1\}\big]\;.
\end{equation*}
\end{lemma}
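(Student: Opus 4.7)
The plan is a direct algebraic computation: substitute $a := \sqrt{\rho_0}$ and $b := \sqrt{\rho_1}$ so that $a,b > 0$ with $a \neq b$, and write
\[
   {}_2v_0 \;=\; \frac{u_1 b - u_0 a}{b - a}.
\]
Then I would compute the two differences $\,{}_2v_0 - u_0$ and $\,{}_2v_0 - u_1$ separately, bringing everything over the common denominator $b - a$. A short calculation gives
\[
   {}_2v_0 - u_0 \;=\; \frac{b(u_1 - u_0)}{b - a}, \qquad {}_2v_0 - u_1 \;=\; \frac{a(u_1 - u_0)}{b - a}.
\]

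The key step is then to form the product of these two expressions, which yields
\[
   ({}_2v_0 - u_0)\,({}_2v_0 - u_1) \;=\; \frac{ab\,(u_1 - u_0)^2}{(b - a)^2}.
\]
Under the hypotheses of the lemma ($\rho_0, \rho_1 > 0$, $\rho_0 \neq \rho_1$, $u_0 \neq u_1$), the factors $ab$, $(u_1-u_0)^2$ and $(b-a)^2$ are all strictly positive, so the product is strictly positive. Hence ${}_2v_0 - u_0$ and ${}_2v_0 - u_1$ have the same (nonzero) sign, which forces ${}_2v_0$ to lie strictly to one side of both $u_0$ and $u_1$. In other words, ${}_2v_0 \notin [\min\{u_0,u_1\},\max\{u_0,u_1\}]$, as claimed.

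There is really no obstacle here: the entire argument is a one-line identity once the right substitution is made. The only mild thing to observe is that strict positivity of the product (as opposed to mere nonnegativity) is exactly what is needed to exclude the endpoints of the closed interval, and this is guaranteed by the three non-degeneracy assumptions in the hypothesis. It is worth remarking in the writeup that when $\rho_0 = \rho_1$ the root ${}_2v_0$ is not even defined (and coincides with ${}_1v_0$ in the appropriate limiting sense), which is consistent with the proof of Proposition~\ref{SWChoiceOfRoot} where this case was handled separately.
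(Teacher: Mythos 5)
Your proof is correct. It rests on exactly the same two identities as the paper's argument, namely ${}_2v_0 - u_0 = (u_1-u_0)\sqrt{\rho_1}/(\sqrt{\rho_1}-\sqrt{\rho_0})$ and ${}_2v_0 - u_1 = (u_1-u_0)\sqrt{\rho_0}/(\sqrt{\rho_1}-\sqrt{\rho_0})$; the only difference is that you multiply the two differences to obtain a manifestly strictly positive product, which lets you conclude in one step, whereas the paper reaches the same conclusion by a case distinction on the sign of $\sqrt{\rho_1}-\sqrt{\rho_0}$ (and as a by-product records on which side of the interval ${}_2v_0$ falls, information the lemma itself does not require).
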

\begin{proof}
To prove the assertion, we distinguish cases according to the signs of the denominator. In the first case,
$\sqrt{\rho_1}-\sqrt {\rho_0}<0$. Here we use
\begin{equation*}
 \frac{u_1\sqrt{\rho_1}-u_0\sqrt{\rho_0}}{\sqrt{\rho_1}-\sqrt {\rho_0}}=u_0+\frac{[u]\sqrt{\rho_1}}{\sqrt{\rho_1}-\sqrt {\rho_0}}\,.
\end{equation*}
Now if $u_0>u_1$ then $[u]<0$ and thus $_2v_0>u_0>u_1$. If on the other hand $u_0 < u_1$ then $[u] >0$ and thus $_2v_0< u_0< u_1$.

In the second case in which $\sqrt{\rho_1}-\sqrt {\rho_0}>0$, we rewrite
\begin{equation*}
 \frac{u_1\sqrt{\rho_1}-u_0\sqrt{\rho_0}}{\sqrt{\rho_1}-\sqrt {\rho_0}}=u_1+\frac{[u]\sqrt{\rho_0}}{\sqrt{\rho_1}-\sqrt {\rho_0}}\,.
\end{equation*}
For $u_0>u_1$ we again have $[u]<0$ and conclude that $_2v_0<u_1<u_0$, while for $u_0 < u_1$ and thus $[u] >0$ we infer that $_2v_0 > u_1 > u_0$.
\end{proof}

\section{The pseudo-Riemann problem}\label{SecRiem}

In the next step we aim at solving the Riemann problem for system \eqref{EulerNdRad}, together with the boundary conditions \eqref{boundcond}, by solutions that can contain radially symmetric shadow waves and point masses at the origin.

\subsection{Pseudo-Riemann data}

First we want to discuss which problem in spherical coordinates corresponds to the piecewise constant initial data of the Riemann problem in a single space dimension.
From equation \eqref{EulerNdRad} we calculate the solutions that are constant in time. Apart from the vacuum solutions $\rho=0$ for which $u$ can actually be an arbitrary function of $r$ and $t$ these are
\begin{equation}
\begin{split}
u=c_1,\ \rho=c_2\cdot r^{1-n}
\end{split}
\end{equation}
where $c_1, c_2$ are arbitrary constants.
This corresponds to solutions of equal mass on all spheres moving radially with velocity $u$ and will replace the constant solutions in our considerations of the Riemann problem.
Throughout this section we develop a complete set of shadow wave solutions for the following set of \emph{pseudo-Riemann} data
\begin{equation}\label{eq:pseudodada}
(\rho,u)|_{t=0}=\begin{cases} (\rho_{0},u_{0})=(\rho_{l}r^{1-n},u_{l}),\qquad r < R \\
(\rho_{1},u_{1})=(\rho_{r}r^{1-n},u_{r}),\qquad r > R\end{cases}
\end{equation}
where $R>0$ denotes the position of the jump in the initial data and $\rho_l, u_l$ as well as $\rho_r,u_r$ are constants satisfying the physical assumption $\rho_l,\rho_r\geq 0$.
Thus the subscript $l$ denotes the solution for $r$ smaller than the location of the jump---comparable to the left-hand side of the jump in the one-dimensional situation---and the
corresponding analogy holds for the subscript $r$. For notational simplicity in the diagrams we have set, in accordance with the previous sections,
$(\rho_0,u_0)=(\rho_l r^{1-n},u_l)$ and $(\rho_1,u_1)=(\rho_r r^{1-n},u_r)$.

\subsection{Uniqueness of entropy solutions for pseudo-Riemann data}
We consider pseudo-Riemann data of the form \eqref{eq:pseudodada}
and wish to show that any physical and dissipative shadow wave starts out propagating with constant velocity.
\begin{proposition}\label{ConstantSpeed}
Assume pseudo-Riemann initial data of the form \eqref{eq:pseudodada} where $\rho_{l},\rho_{r}> 0$ and $u_{l}>u_{r}$ are real constants. Then any physical and dissipative local shadow wave solution has a constant velocity $\dot{c}$. This holds true as long as no interaction with another wave takes place.
\end{proposition}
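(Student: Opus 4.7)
The strategy is to use the special structure of pseudo-Riemann data to reduce the shadow-wave ODEs to an autonomous planar system and then to analyse its orbits by means of an explicit first integral.

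First I would substitute $(\rho_0,u_0) = (\rho_l r^{1-n}, u_l)$ and $(\rho_1,u_1) = (\rho_r r^{1-n}, u_r)$ into the jump terms \eqref{kappa12} evaluated at $r = R + c(t)$. The powers of $r$ factor out cleanly and give
\[
\kappa_i = K_i(\dot c)\,(R+c(t))^{1-n}, \quad i=1,2,
\]
with $K_1,K_2$ depending only on $\dot c$ and on the Riemann constants (both linear in $\dot c$). Setting $\Sigma(t):= (R+c(t))^{n-1}\sigma(t)$ and multiplying \eqref{swdiff1} by $(R+c)^{n-1}$ gives $\dot\Sigma = K_1(\dot c)$, while \eqref{equ:sw:def22} becomes $\Sigma\,\ddot c = F(\dot c)$ with $F(w) := K_2(w) - w K_1(w)$. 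By construction $F$ is a quadratic in $w$ whose roots are precisely the constant speeds ${}_1v_0$ and ${}_2v_0$ from \eqref{ConstWS:speeds}.

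Since $c(0) = \sigma(0) = 0$ and hence $\Sigma(0) = 0$, the equation $\Sigma\,\ddot c = F(\dot c)$ at $t = 0$ forces $F(\dot c(0)) = 0$, so $\dot c(0) \in \{{}_1v_0,{}_2v_0\}$. The hypotheses $\rho_l,\rho_r>0$ and $u_l \neq u_r$ exclude the Rankine--Hugoniot case $\kappa_1 = \kappa_2 = 0$ discussed in the remark after Definition~\ref{def:sw}, hence $\sigma$ cannot be identically zero on any interval and is in fact strictly positive for small $t > 0$. At $t = 0^+$ the instantaneous wave speed is constant, so Proposition~\ref{SWChoiceOfRoot} applies: physicality (which at $t=0$ amounts to $\dot\Sigma = K_1(\dot c) \ge 0$) together with the dissipativity condition \eqref{EntrCondkappa} selects the first root, giving $\dot c(0) = {}_1v_0$.

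The crucial step is to propagate this to $t > 0$. While $\Sigma > 0$, the pair $(w,\Sigma)=(\dot c,\Sigma)$ satisfies the autonomous planar system $\dot\Sigma = K_1(w)$, $\Sigma\,\dot w = F(w)$. Separating variables and invoking the partial-fraction identity
\[
 \frac{K_1(w)}{F(w)} = -\frac{1}{2}\left[\frac{1}{w - {}_1v_0} + \frac{1}{w - {}_2v_0}\right]
\]
yields the explicit first integral
\[
 \Sigma^2\,\bigl|(w - {}_1v_0)(w - {}_2v_0)\bigr| = \mathrm{const}.
\]
The orbit through $(w,\Sigma) = ({}_1v_0,0)$ corresponds to the value $\mathrm{const}=0$, which is the union of the three invariant loci $\{\Sigma=0\}$, $\{w={}_1v_0\}$ and $\{w={}_2v_0\}$. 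Continuity of $\dot c$ with $\dot c(0) = {}_1v_0$, together with $\Sigma(t)>0$ for small $t > 0$, pins the trajectory to the line $\{w={}_1v_0\}$, so $\dot c \equiv {}_1v_0$ until an interaction occurs. The main obstacle is precisely this last uniqueness statement: both invariant speeds ${}_1v_0$ and ${}_2v_0$ touch the singular set $\{\Sigma = 0\}$ at $t = 0$, and ruling out a switch between them relies on the combination of the explicit first integral, continuity of $\dot c$, and the non-degeneracy of the Riemann data.
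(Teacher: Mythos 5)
Your proof is correct, and for the key step it takes a genuinely different route from the paper. Both arguments share the same setup: reduce to \eqref{equ:sw:def22}, use $\sigma(0)=0$ to force $\dot c(0)\in\{{}_1v_0,{}_2v_0\}$, and invoke admissibility to select $\dot c(0)={}_1v_0$ (in fact physicality alone already suffices here, since $K_1({}_2v_0)=-(u_l-u_r)\sqrt{\rho_l\rho_r}<0$ would make $\sigma$ negative for small $t$). Where you diverge is in propagating the constancy forward: the paper rewrites \eqref{equ:sw:def22} as $\sigma\ddot c+(\dot c-{}_1v_0)(\dot c-{}_2v_0)[\rho]=0$ and argues by a sign/monotonicity contradiction (``if $\dot c$ increases, both terms are nonnegative\dots''), which tacitly relies on the sign of $(\dot c-{}_2v_0)[\rho]$, verified only at $t=0$, persisting along the trajectory. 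Your autonomous reduction $\dot\Sigma=K_1(w)$, $\Sigma\dot w=F(w)$ with the conserved quantity $\Sigma^2(w-{}_1v_0)(w-{}_2v_0)$ avoids this: the constant is $0$ at $t=0$, so wherever $\Sigma>0$ the speed is pinned to one of two discrete values, and continuity of $\dot c$ does the rest. One caution: the separation-of-variables derivation divides by $F(w)$, which vanishes exactly on the locus the orbit occupies, so you should verify the first integral by direct differentiation (it follows from the identity $2K_1(w)=[\rho]\,(2w-{}_1v_0-{}_2v_0)$ after stripping the $r^{1-n}$ factors), and you should note the degenerate case $\rho_l=\rho_r$, where $F$ is only linear and ${}_2v_0$ is undefined; there the conserved quantity becomes $\Sigma^2(w-{}_1v_0)$ and the same conclusion holds. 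With those two remarks added, your argument is, if anything, tighter than the published one.
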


{\sc Remark.} Such an interaction can arise, for example, for $u_{l} > 0$, when a contact discontinuity separating the vacuum state emanating from the origin meets the shadow wave, cf. Figure~\ref{BildRiemannExtinct}.

\begin{proof}
Equality \eqref{equ:sw:def22} and the definition of $\kappa_1$, $\kappa_2$ yield
\begin{equation*}
\sigma \ddot{c}+\dot{c}\big(\dot{c}[\rho] - [\rho u]\big) = \dot{c}[\rho u]-[\rho u^{2}]\,.
\end{equation*}
By the same calculation as in \eqref{ConstWS:speeds} this can be written as
\begin{equation} \label{new2}
\sigma \ddot{c}+(\dot{c}-\,_1v_0)(\dot{c}-\,_2v_0)[\rho]=0
\end{equation}
with the velocities
\begin{equation*}
 _1v_0=\frac{u_1\sqrt{\rho_1}+u_0\sqrt{\rho_0}}{\sqrt{\rho_1}+\sqrt {\rho_0}}=\frac{u_r\sqrt{\rho_r}+u_l\sqrt{\rho_l}}{\sqrt{\rho_r}+\sqrt {\rho_l}}\quad \text{and}
 \quad _2v_0=\frac{u_r\sqrt{\rho_r}-u_l\sqrt{\rho_l}}{\sqrt{\rho_r}-\sqrt {\rho_l}}\,.
\end{equation*}
A priori, these velocities need not be constant. It is our aim to verify that in the physical, dissipative case these velocities are constant for pseudo-Riemann data as long as $\rho_r$ and $\rho_l$ at the position of the shadow wave do not change.
At time $t=0$ the shadow wave starts with density zero, so $\sigma(0)=0$. Equation \eqref{equ:sw:def22} leads to $\dot{c}(0)\kappa_1(0) = \kappa_2(0)$. Using the same calculations as in the proof of Proposition \ref{SWChoiceOfRoot}, dissipativity (or equivalently overcompressibility) holds initially if and only if
$$
\dot c(0)=\,_1v_0\,.
$$
Evaluated at $t=0$ the term $(\dot{c}-\,_2v_0)[\rho]$ can be calculated to be
\[
 (\dot{c}-\,_2v_0)[\rho]=(\,_1v_0-\,_2v_0)[\rho]=(u_l-u_r)\sqrt{\rho_l\rho_r}r^{1-n}>0\,.
\]
Further, $\sigma \geq 0$ because the shadow wave is assumed to be physical.

Suppose that $\dot{c}$ increases as $t>0$. Then $\sigma \ddot{c}\geq 0$
and $(\dot{c}-\,_1v_0)(\dot{c}-\,_2v_0)[\rho]>0$. That is, (\ref{new2})
cannot be satisfied. Similarly, if $\dot{c}$ decreases, both terms
are now negative and (\ref{new2}) cannot be satisfied either.
Therefore, $\dot{c}$ has to be constant, and the shadow wave is given by formula \eqref{SwconstV}.
\end{proof}
\begin{corollary}
Under the assumptions of Proposition \ref{ConstantSpeed}, the pseudo-Riemann problem for system \eqref{EulerNdRad} has at most one physical and dissipative local shadow wave solution, locally near $r = R$ and for sufficiently small time.
\end{corollary}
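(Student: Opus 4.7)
The plan is to chain together the three structural results already established: Proposition \ref{ConstantSpeed} forces $\dot c$ to be constant for any physical, dissipative local shadow wave starting from pseudo-Riemann data; Proposition \ref{SWChoiceOfRoot} then pins down which constant speed is admissible; finally, Proposition \ref{ConstWS:exist1} (together with the explicit mass formula \eqref{SWMass}) specifies $\sigma(t)$ uniquely in terms of the data. What remains is to argue that the regular parts $(\rho_0,u_0)$ and $(\rho_1,u_1)$ on either side of the front are themselves uniquely determined near $r=R$ for small $t$.

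More concretely, I would proceed as follows. First, apply Proposition \ref{ConstantSpeed} to conclude that $\dot c(t) = v_0$ is constant as long as no further wave interaction takes place. Next, observe that at $t=0$, one has $\sigma(0)=0$, so the dissipativity/overcompressibility analysis from Proposition \ref{SWChoiceOfRoot} rules out the root $_2v_0$; hence necessarily
\begin{equation*}
v_0 \;=\; {}_1v_0 \;=\; \frac{u_r\sqrt{\rho_r}+u_l\sqrt{\rho_l}}{\sqrt{\rho_r}+\sqrt{\rho_l}}\,,
\end{equation*}
which is uniquely determined by the initial data. Given this $v_0$ and the initial condition $\sigma(0)=0$, the mass formula \eqref{SWMass} gives $\sigma(t)$ uniquely, and the intermediate velocity inside the shadow wave is fixed by $v = \dot c \equiv v_0$ (Proposition \ref{swprop}). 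So the singular part of the shadow wave is uniquely determined.

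It remains to treat the regular parts $(\rho_0,u_0)$ in $\{r<R+v_0t\}$ and $(\rho_1,u_1)$ in $\{r>R+v_0t\}$. These are classical $C^1$ solutions of \eqref{EulerNdRad} with the pseudo-Riemann initial data \eqref{eq:pseudodada}. Locally near $r=R$ and for sufficiently small $t$, neither the vacuum front emanating from the origin nor any other wave has reached the shadow wave (this is the local-in-time hypothesis of the corollary). On each side, the momentum equation combined with the mass equation gives $\partial_t u + u\,\partial_r u = 0$, a Burgers-type equation along radial characteristics; with constant initial velocities $u_l$, $u_r$ the characteristics are straight lines and $u \equiv u_l$ (resp.\ $u_r$) in a neighborhood of $\{R+v_0 t\}$. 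Plugging this into the continuity equation shows that $\rho = \rho_l r^{1-n}$ (resp.\ $\rho_r r^{1-n}$) is preserved. Thus the regular parts coincide with the initial profiles and are uniquely determined.

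The only non-routine point is the step ``any admissible $v_0$ must equal $_1v_0$ at $t=0$'': it is not entirely automatic that the argument from Proposition \ref{SWChoiceOfRoot}, which was phrased for globally constant speed, applies at the single instant $t=0$. The way around this is to notice that, since $\sigma(0)=0$, the identity \eqref{equ:sw:def22} forces $\dot c(0)\,\kappa_1(0) = \kappa_2(0)$, and the entropy inequality \eqref{EntrCondkappa} at $t=0$ reduces exactly to the overcompressibility condition, whose only solution among the two roots \eqref{ConstWS:speeds} is $_1v_0$ (the second root is excluded by the lemma following Proposition \ref{SWChoiceOfRoot}). This is precisely the computation already carried out inside the proof of Proposition \ref{ConstantSpeed}, so it can be cited directly and the corollary follows in a few lines.
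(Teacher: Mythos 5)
Your proposal is correct and follows essentially the same route as the paper, which simply combines Proposition \ref{ConstantSpeed} (constancy of $\dot c$) with Proposition \ref{SWChoiceOfRoot} (selection of the root $_1v_0$); your additional verification that the regular parts and the mass $\sigma(t)$ via \eqref{SWMass} are uniquely determined fills in details the paper leaves implicit, but introduces no new idea.
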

\begin{proof}
The assertion follows by combining Proposition \ref{ConstantSpeed} and Proposition \ref{SWChoiceOfRoot}.
\end{proof}
{\sc Remark.}
The condition $u_{l}>u_{r}$ corresponds to the case when a shadow wave solution of nonzero mass will develop. Thus we have derived that physical shadow wave solutions of the Riemann problem
with nonincreasing kinetic energy have a unique wave speed.
This is in general not the case for nonregular solutions of pressureless gas dynamics (cf.~\cite{Bo94}); not even using all smooth convex entropy functionals can remedy this nonuniqueness.
Our result is however in line with these considerations because we only admit a single shadow wave. If two shadow waves starting from the same point are allowed, it is actually very simple to construct examples for nonuniqueness. However, multiple shadow waves can be ruled out either by \emph{requiring} overcompressibility or by resorting to the underlying sticky particle system (cf. \cite{BrGr}).

\subsection{Existence of global solutions to the pseudo-Riemann problem}

In this section, we will compute radially symmetric, physical and dissipative solutions to the pseudo-Riemann problem for system \eqref{EulerNdRad} explicitly. The solutions will exist globally in time (i.e., for $t\geq 0$) and will be composed of a minimal number of shadow waves, contact discontinuities, vacuum states bounded by shock spheres, and possibly a delta function singularity accumulating at the origin. The conditions imposed at $r=0$ are the ones spelled out in Definition \ref{def:radialSW}.

Recall from the remark after Definition \ref{def:sw} that the Rankine-Hugoniot conditions are given by $\kappa_1 = \kappa_2 = 0$. As noted there, these conditions can only hold if $\rho_0 =0$ or $\rho_1 = 0$ or $u_0 = u_1$. The first two cases lead to a vacuum state bounded by a shock curve with speed $\dot{c} = u_1$ or $\dot{c} = u_0$, respectively. The third case amounts to a contact discontinuity. All three waves arise as special cases of shadow waves for which $\sigma(t) = 0$.

\begin{proposition}
Let $\rho_0, \rho_1 \geq 0$ and let $u_l, u_r$ be arbitrary real numbers. Then the pseudo-Riemann problem \eqref{eq:pseudodada} for system \eqref{EulerNdRad} has a radially symmetric, global, physical and dissipative solution. At each point of time, the solution consists of at most one shadow wave, one contact discontinuity, three shocks following vacuum states and a delta function at the origin.
\end{proposition}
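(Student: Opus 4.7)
My plan is to construct solutions explicitly by case analysis on the data $(\rho_l,\rho_r,u_l,u_r)$, using building blocks prepared in the earlier sections: (i) a physical and dissipative shadow wave of constant speed $_1v_0$ at $r=R$ from Propositions~\ref{ConstWS:exist1} and~\ref{SWChoiceOfRoot}, (ii) degenerate shadow waves with $\sigma\equiv 0$ playing the role of vacuum-bounding shocks (the case $\kappa_1=\kappa_2=0$ analysed in the remark after Definition~\ref{def:sw}, which arises whenever one side is vacuum or $u_0=u_1$), (iii) a contact discontinuity when $u_l=u_r$ and $\rho_l\neq\rho_r$, and (iv) a point mass $m_0(t)$ at the origin governed by the boundary conditions~\eqref{boundcond}. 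The background states on either side of every wave are the stationary radial solutions $(\rho_\ast r^{1-n},u_\ast)$ introduced at the start of this section, which are classical solutions of~\eqref{EulerNdRad}.

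Near $r=R$, I would distinguish three subcases: if $u_l>u_r$, I insert the compressive shadow wave of speed $_1v_0$ with mass $\sigma(t)$ given by~\eqref{SWMass}; if $u_l<u_r$, the two backgrounds separate, leaving a vacuum region bounded by two degenerate shocks at $r=R+u_l t$ and $r=R+u_r t$; if $u_l=u_r$ with $\rho_l\neq\rho_r$, I insert a single contact discontinuity, and if in addition $\rho_l=\rho_r$ nothing happens. Near $r=0$, I would distinguish by the sign of $u_l$: for $u_l>0$ the inner state recedes, leaving a vacuum ball bounded by a third degenerate shock at $r=u_l t$ with $m_0(t)\equiv 0$ by the first line of~\eqref{boundcond}; for $u_l<0$, applying the second line of~\eqref{boundcond} to the stationary inner state gives the linearly growing point mass $m_0(t)=-|\mathbb{S}^{n-1}|\rho_l u_l\,t$, because the flux $r^{n-1}\rho u$ of the inner state is the constant $\rho_l u_l$; if $u_l=0$ or $\rho_l=0$ nothing flows in. The local pieces are then prolonged past interaction times: when the shadow wave reaches the origin at $t^\ast=-R/{}_1v_0$ (only possible for $_1v_0<0$) its mass is deposited into $m_0$ by the jump condition in Definition~\ref{def:radialSW}; when a vacuum front emanating from the origin overtakes the shadow wave (only possible for $u_l>0$ and $_1v_0<u_l$), the shadow wave's inner background becomes vacuum and the construction continues with the corresponding degenerate building block.

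Physicality ($\sigma\geq 0$ on any active shadow wave) and the dissipativity condition~\eqref{EntrCondkappa} reduce to Proposition~\ref{SWChoiceOfRoot} for the compressive shadow wave and are trivial for the degenerate pieces and for the contact discontinuity. At any fixed $t>0$, the simultaneous features are at most one shadow wave (at $r=R+c(t)$), at most one contact discontinuity, and up to three vacuum-bounding shocks (among the curves $r=u_l t$, $r=R+u_l t$, $r=R+u_r t$), plus the point mass at $r=0$, which matches the bound in the statement. The main obstacle I anticipate is organising the case-work so that it is exhaustive and closed under the interactions listed above; the nontrivial analytic content is already available in Propositions~\ref{ConstWS:exist1}, \ref{SWChoiceOfRoot} and~\ref{ConstantSpeed}, and what remains is systematic bookkeeping to verify that no new type of wave needs to be introduced at any collision time and that the boundary condition~\eqref{boundcond} continues to be satisfied thereafter.
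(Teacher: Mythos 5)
Your overall architecture (case analysis on the signs of $u_l$ and $u_l-u_r$, assembled from the constant-speed shadow wave, vacuum-bounding shocks, contact discontinuities, and the point mass at the origin governed by \eqref{boundcond}) coincides with the paper's, and your treatment of the cases $u_l\leq 0$ --- including the deposit of $\sigma$ into $m_0$ when the shadow wave reaches the origin --- is essentially the paper's argument. However, there is one genuine gap, and it sits exactly at the step the paper identifies as the delicate one. In the case $u_l>0$, $u_r<u_l$, $\rho_l,\rho_r>0$, the vacuum front $r=u_l t$ emanating from the origin catches the shadow wave at $t_{in}=R/(u_l-{}_1v_0)$, at which moment the wave has already absorbed \emph{all} of the interior mass, so $\sigma(t_{in})=R\sqrt{\rho_l}(\sqrt{\rho_l}+\sqrt{\rho_r})\,r(t_{in})^{1-n}>0$. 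You propose to continue ``with the corresponding degenerate building block,'' i.e.\ your item (ii), a wave with $\sigma\equiv 0$ and $\kappa_1=\kappa_2=0$. That cannot be correct: it would annihilate the strictly positive mass already carried by the front and violate conservation of mass. The correct continuation is a shadow wave with vacuum left state, \emph{positive} mass, and \emph{non-constant} speed; note that Proposition~\ref{ConstantSpeed}, which you invoke for constancy of $\dot c$, explicitly requires $\rho_l,\rho_r>0$ and no interaction, so it gives you nothing past $t_{in}$.

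Concretely, for $t>t_{in}$ one must solve the system \eqref{equ:sw:def1}--\eqref{equ:sw:def2} with $\rho_0=0$, $\kappa_1=r^{1-n}\rho_r(\dot c-u_r)$, $\kappa_2=u_r\kappa_1$ and initial data $\sigma(t_{in})>0$, $\dot c(t_{in})={}_1v_0$. The paper reduces this (via $p=\sigma r^{n-1}$ and $w=\dot c-u_r$) to $\dot w/w^3=-C/2$, yielding $\dot c(t)=u_r+(Ct+D)^{-1/2}$ and $\sigma(t)=\tfrac{2\rho_r}{C}(Ct+D)^{1/2}(R+c(t))^{1-n}$, a decelerating wave whose speed tends to $u_r$; if $u_r<0$ one must further locate the time $t_{sw0}$ at which $c(t)=-R$ and deposit the mass at the origin there. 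None of this is covered by the building blocks you prepared, and it is the main new analytic content of the paper's proof of this proposition. A secondary, smaller omission: your claim that dissipativity is ``trivial'' for the pieces other than the constant-speed shadow wave also needs to be checked for this post-interaction wave, since Proposition~\ref{SWChoiceOfRoot} only covers the constant-speed case.
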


The remainder of this subsection will be devoted to proving the proposition.
We will distinguish cases depending on the signs of $u_l$ and the relation between $u_l$ and $u_r$.

\subsubsection{Pseudo-Riemann data, first case: $u_l\leq 0$}

\paragraph{The case $u_r> u_l$: vacuum state and shocks}
In this case the parts of the solution separate at the jump and the density is zero in-between. The velocity in that region can be chosen arbitrarily since in the equations $\rho$ always multiplies the velocity and all jump conditions are trivially fulfilled. The following graphs illustrate the situation in case $u_r$ is larger or smaller than zero.
We start with the case $u_r>0$, depicted in Figure~1.
\begin{figure}[htb]
\centering
\begin{tikzpicture}[scale=0.9]
\draw[->] (0,0) -- (6,0) node[anchor=north] {$r$};
\draw	(1,0) node[anchor=north] {$\rho_0$, $u_0$}
		(3,0) node[anchor=north] {}
		(5,0) node[anchor=north] {$\rho_1$, $u_1$};
\draw[->, dashed] (4,0)--(5,2);
\draw[->, dashed] (2,0)--(1.5,2);

\draw[->] (0,0) -- (0,4) node[anchor=east] {$t$};
\draw[dotted] (3,0) -- (5,4);
\draw[dotted] (3,0) -- (2,4);
\draw (5.5,3) node[anchor=north] {$\rho_1$,$u_1$}
      (1,3) node[anchor=north] {$\rho_0$,$u_0$}
      (3.2,3) node[anchor=north] {$\rho=0$};
\end{tikzpicture}
\caption{Vacuum state, $u_l<0\leq u_r$}
\end{figure}
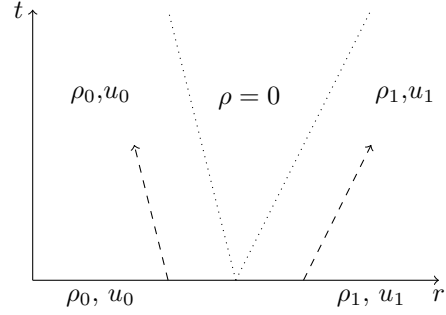

For $u_l<0$, the complete solution is given by
\begin{equation*}
\begin{split}
m_0(t)&=\left\{
 \begin{aligned}
 &-t\rho_l u_l|\mathbb{S}^{n-1}| \quad &&\text{for }0< t\leq -\tfrac{R}{u_l}\\
 &R\rho_l|\mathbb{S}^{n-1}| \quad && -\tfrac{R}{u_l}\leq t\,,
 \end{aligned}
 \right.\\[4pt]
(\rho,u)&=\left\{
 \begin{aligned}
 &(\rho_l r^{1-n},u_l)\quad &&\text{for } 0< r\leq R+u_l t\\
 &(0,\text{arb.}) \quad&& R+u_l t\leq r\leq R+u_r t\\
 &(\rho_r r^{1-n},u_r) \quad && R+u_rt\leq r\,.
 \end{aligned}
 \right.
\end{split}
 \end{equation*}
For $u_l=0$ the same holds true but $m_0(t)=0$. In case $u_r< 0$ we have the situation of Figure~2.\\
\begin{figure}[htb]
\centering
\begin{tikzpicture}[scale=0.9]
\draw[->] (0,0) -- (6,0) node[anchor=north] {$r$};
\draw	(1,0) node[anchor=north] {$\rho_0$, $u_0$}
		(3,0) node[anchor=north] {}
		(5,0) node[anchor=north] {$\rho_1$, $u_1$};
\draw[->] (0,0) -- (0,4) node[anchor=east] {$t$};
\draw[->, dashed] (1.2,0)--(0.2,0.667);
\draw[->, dashed] (5,0)--(4.43,0.667);
\draw[dotted] (3,0) -- (0,2);
\draw[dotted] (3,0) -- (0,3.5);
\draw (3,3) node[anchor=north] {$\rho_1$,$u_1$}
      (1,1) node[anchor=north] {$\rho_0$,$u_0$}
      (0.6,2.5) node[anchor=north] {$\rho=0$};
\end{tikzpicture}
 \caption{Vacuum state, $u_l<u_r<0$}
\end{figure}
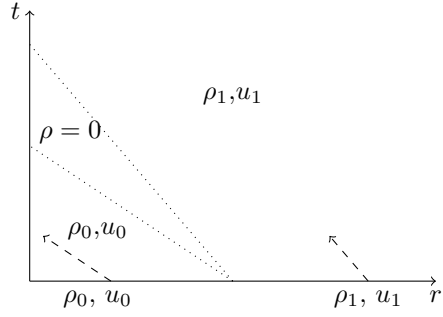

The complete solution is a bit more complicated at $0$,
\begin{equation*}
\begin{split}
m_0(t)&=\left\{
 \begin{aligned}
 &-t\rho_l u_l|\mathbb{S}^{n-1}|\quad &&\text{for } 0< t\leq -\tfrac{R}{u_l}\\
 &R\rho_l|\mathbb{S}^{n-1}|\quad && -\tfrac{R}{u_l}\leq t\leq -\tfrac{R}{u_r}\\
 &(R\rho_l-(t+\tfrac{R}{u_r})\rho_ru_r)|\mathbb{S}^{n-1}|\quad && -\tfrac{R}{u_r}\leq t\,,
 \end{aligned}
 \right.\\[4pt]
(\rho,u)&=\left\{
 \begin{aligned}
 &(\rho_l r^{1-n},u_l) \quad &&\text{for }0< r\leq R+u_l t\\
 &(0,\text{arb.})\quad && R+u_l t\leq r\leq R+u_r t\\
 &(\rho_r r^{1-n},u_r)\quad && R+u_rt\leq r\,.
 \end{aligned}
 \right.
\end{split}
 \end{equation*}

\paragraph{The case $u_r=u_l$: contact discontinuity}
The contact discontinuity solution can be viewed as a special case of the solutions above, with width of the lacuna equal to zero. Thus we will not write down the solution again.

\paragraph{The case $u_r<u_l$: shadow wave}
We have learned that a physical and dissipative shadow wave only exists in this case. As determined by \eqref{Speed:entrSW} it travels with velocity
\begin{equation*}
v_0= \frac{u_1\sqrt{\rho_1}+u_0\sqrt{\rho_0}}{\sqrt{\rho_1}+\sqrt {\rho_0}}=\frac{u_r\sqrt{\rho_r}+u_l\sqrt{\rho_l}}{\sqrt{\rho_r}+\sqrt {\rho_l}}
\end{equation*}
which satisfies the overcompressibility condition $u_r<v_0<u_l$, cf. Figure~3.

\begin{figure}[htb]
\centering
 \begin{tikzpicture}[scale=0.9]
\draw[->] (0,0) -- (6,0) node[anchor=north] {$r$};
\draw	(1,0) node[anchor=north] {$\rho_0$, $u_0$}
		(3,0) node[anchor=north] {}
		(5,0) node[anchor=north] {$\rho_1$, $u_1$};

\draw[->] (0,0) -- (0,4) node[anchor=east] {$t$};
\draw[dotted] (3,0) -- (0,3.5);
\draw (4,3) node[anchor=north] {$\rho_1$,$u_1$}
      (1,1) node[anchor=north] {$\rho_0$,$u_0$}
      (1.1,2.5) node[anchor=north] {SW};
\draw[->, dashed] (2,0) -- (1.7,1.2);
\draw[->, dashed] (4,0) -- (2.3,1.2);
\end{tikzpicture}
\caption{Shadow wave, $u_r<u_l\leq 0$}
\end{figure}
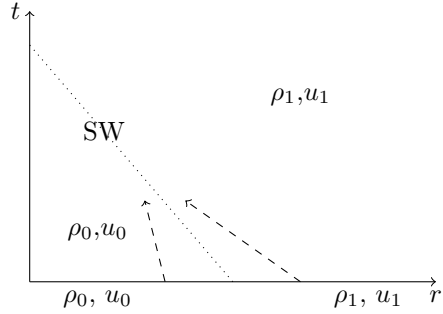
In order to calculate the mass in the shadow wave we need to determine the influx, given by $\kappa_1$.
A straightforward calculation yields
\begin{equation*}
 \kappa_1=\sqrt{\rho_0}\sqrt{\rho_1}(u_0-u_1)=r^{1-n}\sqrt{\rho_l} \sqrt{\rho_r}(u_l-u_r)\,.
\end{equation*}
In the case $\kappa_2\neq 0$, we can derive the mass $\sigma$ in the shadow wave from equation \eqref{SWMass} and it is given by
\begin{equation}\label{sigt}
 \sigma(t)=t\sqrt{\rho_l}\sqrt{\rho_r}(u_l-u_r)(R+v_0t)^{1-n}\,.
\end{equation}
With this result we can again give a complete representation of the shadow wave solution,
\begin{equation*}
\begin{split}
m_0(t)&=-|\mathbb{S}^{n-1}|\rho_lu_l t+H(t-t_0)\big(\sqrt{\rho_l} \sqrt{\rho_r}[u]t_0 + |\mathbb{S}^{n-1}|[\rho u]t_0
- |\mathbb{S}^{n-1}| \rho_ru_r t\big)\,,\\[4pt]
(\rho,u)&=\left\{
 \begin{aligned}
 &(\rho_l r^{1-n},u_l) &&0< r\leq R+v_0 t-\tfrac{\eps}{2}\\
 &\big(t\sqrt{\rho_l}\sqrt{\rho_r}(u_l-u_r)\tfrac{1}{\eps},v_0\big) && R+v_0 t-\tfrac{\eps}{2}\leq r\leq R+v_0 t+\tfrac{\eps}{2}\\
 &(\rho_r r^{1-n},u_r) && R+v_0t+\tfrac{\eps}{2}\leq r
 \end{aligned}
 \right.
\end{split}
 \end{equation*}
with $t_0 = - R/v_0$, where $H$ denotes the Heaviside function.

When $\kappa_2 = 0$, either $\kappa_1 = 0$ as well, or $v_0 = 0$. Either possibility can only arise when one of the initial states is a vacuum state. Thus this case leads to a vacuum state followed by a shock (or vice versa).

\subsubsection{Pseudo-Riemann data, second case: $u_l>0$}

In case $u_l>0$ initially we always have a vacuum state followed by a shock emanating from $\vec x=0$. The situation at $|\vec x|=R$ depends on the relation of $u_l$ and $u_r$.

\paragraph{The case $u_r\geq u_l$: vacuum states or contact discontinuity}
In this situation we either have another vacuum state (for $u_r>u_l$) or a contact discontinuity (for $u_r=u_l$) evolving from $r=R$ at $t=0$. The vacuum case is depicted in Figure 4.

\begin{figure}[htb]
\centering
\begin{tikzpicture}[scale=0.9]
\draw[->] (0,0) -- (6,0) node[anchor=north] {$r$};
\draw	(1,0) node[anchor=north] {$\rho_0$, $u_0$}
		(3,0) node[anchor=north] {}
		(5,0) node[anchor=north] {$\rho_1$, $u_1$};
\draw[->, dashed] (4,0)--(5.5,2);
\draw[->, dashed] (1.5,0)--(2,2);
\draw[->] (0,0) -- (0,4) node[anchor=east] {$t$};
\draw[dotted] (0,0) -- (1,4);
\draw[dotted] (3,0) -- (6,4);
\draw[dotted] (3,0) -- (4,4);
\draw (5.8,3) node[anchor=north] {$\rho_1$,$u_1$}
      (2.2,3) node[anchor=north] {$\rho_0$,$u_0$}
      (4.3,3) node[anchor=north] {$\rho=0$}
      (0.2,3) node[anchor=north] {$\rho=0$};
\end{tikzpicture}
\caption{Vacuum states, $0<u_l<u_r$}
\end{figure}
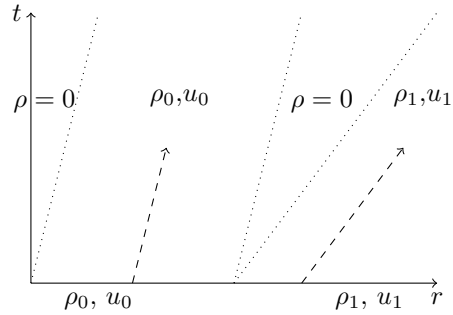
The explicit representation of the solution takes the simple form
\begin{equation*}
\begin{split}
m_0(t)&=0\,,\\
(\rho,u)&=\left\{
 \begin{aligned}
 &(0,\text{arb}) \quad &&\text{for } 0<r< u_l t\\
  &(\rho_l r^{1-n},u_l)\quad && u_l t< r\leq R+u_l t\\
 &(0,\text{arb.}) \quad&& R+u_l t\leq r\leq R+u_r t\\
 &(\rho_r r^{1-n},u_r) \quad && R+u_lt\leq r\,.
 \end{aligned}
 \right.
\end{split}
 \end{equation*}

\paragraph{The case $u_r<u_l$: shadow wave} We begin with the generic case $\rho_l > 0$, $\rho_r > 0$.
In this situation the overcompressibility condition $u_l\geq v_0\geq u_r$ must be satisfied.
This is the most delicate case since at some point the shadow wave will have absorbed all the mass on its left-hand side
(more precisely, the mass in the interior of the sphere of radius $R+v_0t$)
as illustrated in Figure~5.

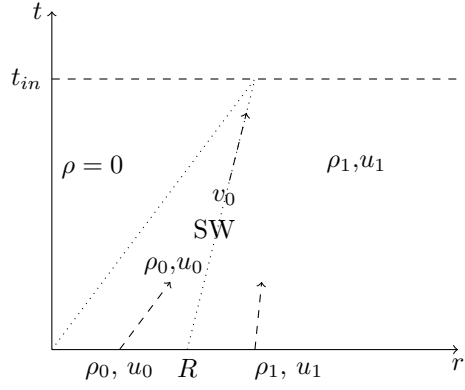
\begin{figure}[htb]
\centering
\begin{tikzpicture}[scale=0.9]
\draw[->] (0,0) -- (6,0) node[anchor=north] {$r$};
\draw[->] (0,0) -- (0,5) node[anchor=east] {$t$};
\draw	(1,-0.05) node[anchor=north] {$\rho_0$, $u_0$}
		(2,0) node[anchor=north] {}
		(3.5,-0.05) node[anchor=north] {$\rho_1$, $u_1$};
\draw[->, dashed] (3,0)--(3.1,1);
\draw[->, dashed] (1,0)--(1.75,1);
\draw[dotted] (0,0) -- (3,4);
\draw[dotted] (2,0) -- (3,4);
\draw[dashed] (0,4) -- (6,4);
\draw[->, dashed] (2.625,2.5)--(2.875,3.5);
\draw (4.5,3) node[anchor=north] {$\rho_1$,$u_1$}
      (1.8,1.5) node[anchor=north] {$\rho_0$,$u_0$}
      (0.6,3) node[anchor=north] {$\rho=0$}
       (2.55,2.5) node[anchor=north] {$v_0$}
       (2.4,2.05) node[anchor=north] {\text{SW}}
       (2,0) node[anchor=north] {$R$}
      (0,4) node[anchor=east]{$t_{in}$};
\end{tikzpicture}
\caption{Shadow wave with pseudo-Riemann data and extinct left-hand side solution}\label{BildRiemannExtinct}
\end{figure}

For $t<t_{in}$ the solution is the same as in the previous shadow wave case when $u_r<u_l$.
We have to calculate the solution after the interior part of the initial data has been absorbed completely, which will happen at time
\begin{equation*}
t_{in}=\frac{R}{u_l-v_0}=R\frac{\sqrt{\rho_r}+\sqrt{\rho_l}}{\sqrt{\rho_r}(u_l-u_r)}\,.
\end{equation*}
In this situation we are left with a shadow wave with nonzero initial mass and vacuum in its interior.
The corresponding initial time is $t_{in}$ and initial mass and velocity are given by the values in the shadow wave at $t_{in}$.
Equation \eqref{sigt} evaluated at $t_{in}$ yields
\begin{equation*}
\sigma(t_{in})=t_{in}\sqrt{\rho_l}\sqrt{\rho_r}(u_l-u_r)\,r(t_{in})^{1-n}
    = R\sqrt{\rho_l}(\sqrt{\rho_l}+\sqrt{\rho_r})\,r(t_{in})^{1-n}
\end{equation*}
with $r(t_{in}) = R+v_0t_{in}$. Further, $v_0$ is again given by
\begin{equation}\label{swveloctotin}
 v_0= \frac{u_r\sqrt{\rho_r}+u_l\sqrt{\rho_l}}{\sqrt{\rho_r}+\sqrt {\rho_l}}\,,
\end{equation}
and the position at that time is
\[
r(t_{in})=R\left(1+\frac{u_r\sqrt{\rho_r}+u_l\sqrt{\rho_l}}{\sqrt{\rho_l}(u_l-u_r)}\right)
   =R\frac{u_l(\sqrt{\rho_r}+\sqrt{\rho_l})}{\sqrt{\rho_r}(u_l-u_r)}\,.
\]
For times larger than $t_{in}$ we have to calculate the evolution of a shadow wave with initial nonzero mass and momentum, but for $\rho_l=0$.
To do so observe that for $\rho_l=\rho_0=0$ we are in the simple case $\kappa_1=\rho_1(\dot c-u_1)=r^{1-n}\rho_r(\dot c-u_r)$ and $\kappa_2=u_r\kappa_1$.
Since $\sigma(t_{in})\neq 0$ we expect the system of differential equations \eqref{equ:sw:def1}, \eqref{equ:sw:def2}
to have a (locally) unique solution $\sigma(t), \dot c(t)$ provided we prescribe the initial mass and velocity of the delta function part.
Using the special form of $\rho_1$ in the system of equations and performing the substitution $\sigma r^{n-1}=p$ (with $r = R + c(t)$) yields
\begin{align*}
 \dot p&=\rho_r(\dot c-u_r)\\
 \ddot c p+\dot c\dot p&=u_r\rho_r(\dot c-u_r)\,.
\end{align*}
Plugging the first equation into the second one and denoting the relative velocity by $w=\dot c-u_r$ results in the equation
\[
p=-\rho_r\frac{w^2}{\dot w}\,.
\]
Inserting this in the first equation we derive
\[
 \frac{\ddot w}{\dot w}=3\frac{\dot w}{w}\,.
\]
This equation can be integrated once readily to yield
\begin{equation}\label{omegadiff}
 \frac{\dot w}{w^3}=-\tfrac{1}{2}C\,,
\end{equation}
for a constant $C$. (We choose the minus sign expecting $C$ to be nonnegative because $w$ is expected to be positive and $\dot w$ negative.) Another integration results in
\[
 w=\left(Ct+D\right)^{-1/2}\,,
\]
where $D$ is an arbitrary constant. Using equation~\eqref{omegadiff} and the result above in the equation for $p$ we arrive at
\[
 p=-\rho_r\frac{w^2}{\dot w}=\rho_r\frac{2}{Cw}=\frac{2\rho_r}{C}\left(Ct+D\right)^{1/2}\,.
\]
In this way the unique solution can be calculated for $t>t_{in}$. In the original variables it reads
\begin{align}
 \dot c(t)&=u_r+(Ct+D)^{-1/2} \notag\\
 \sigma(t)&=\frac{2\rho_r}{C}\left(Ct+D\right)^{1/2}\big(R+c(t)\big)^{1-n}\,.\label{sigmaaftertin}
\end{align}
We integrate the first equation in $t$ again:
\begin{equation}\label{caftertin}
 c(t)=u_rt+E+\frac2{C}(Ct+D)^{1/2}\,,
\end{equation}
and can determine the constants $C,\,D,\,E$ from the conditions at $t=t_{in}$, namely
\begin{align*}
&\dot c(t_{in})=u_r+(Ct_{in}+D)^{-1/2}=\frac{u_r\sqrt{\rho_r}+u_l\sqrt{\rho_l}}{\sqrt{\rho_r}+\sqrt {\rho_l}}\\
&\sigma(t_{in})=\frac{2\rho_r}{C}\left(Ct_{in}+D\right)^{1/2}\,r(t_{in})^{1-n}=R\sqrt{\rho_l}(\sqrt{\rho_r}+\sqrt{\rho_l})\,r(t_{in})^{1-n}\\
&c(t_{in})=u_rt_{in}+E+\frac2{C}(Ct_{in}+D)^{1/2}=R\frac{u_l(\sqrt{\rho_r}+\sqrt{\rho_l})}{\sqrt{\rho_r}(u_l-u_r)}\,.
\end{align*}
This system of equations can be solved for $C,\,D$ and $E$ and we arrive at
\begin{align}
 C=&\frac{2\rho_r}{R\rho_l(u_l-u_r)}\label{constantsC}\\
 D=&\frac{\rho_l-\rho_r}{\rho_l(u_l-u_r)^2}\label{constantsD}\\
 E=&\frac{R}{\rho_r}(\rho_r - \rho_l)\label{constantsE}
\end{align}
The velocity of the shadow wave $\dot{c}(t)$ will approach $u_r$ for $t\rightarrow\infty$. If $u_r\geq 0$ the shadow wave will never reach zero and we are in the situation of Figure~6.
\begin{figure}[htb]
\centering
\begin{tikzpicture}[scale=0.9]
\draw[->] (0,0) -- (6,0) node[anchor=north] {$r$};
\draw[->] (0,0) -- (0,6) node[anchor=east] {$t$};
\draw	(1,0) node[anchor=north] {$\rho_l$, $u_l$}
		(2,0) node[anchor=north] {}
		(5,0) node[anchor=north] {$\rho_r$, $u_r$};
\draw[->, dashed] (3,0)--(3.1,1);
\draw[->, dashed] (1,0)--(1.75,1);
\draw[dotted] (0,0) -- (3,4);
\draw[dotted] (2,0) -- (3,4);
\draw[dotted] (3,4) .. controls (3.125,4.5) and (3.2,5) .. (3.3,6);
\draw (4.5,3) node[anchor=north] {$\rho_r$,$u_r$}
      (1.8,1.5) node[anchor=north] {$\rho_l$,$u_l$}
      (0.6,3) node[anchor=north] {$\rho=0$}
      (2.5,2) node[anchor=north] {\text{SW}}
      (3.2,5) node[anchor=north] {\text{SW}};
\end{tikzpicture}
\caption{Shadow wave, $0<u_r<u_l$}
\end{figure}
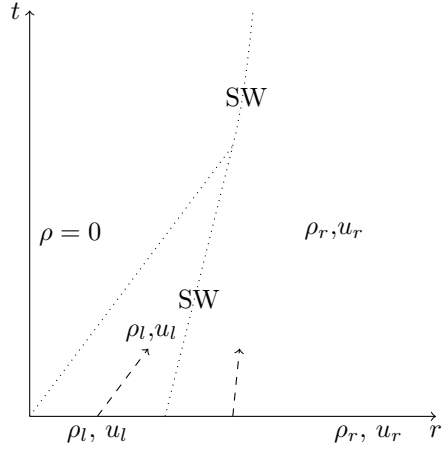

We do not write the complete solution but note that before $t_{in}$ the shadow wave position is determined by $R+v_0t$ with velocity from equation \eqref{swveloctotin}. The mass is given in \eqref{sigt}.
After $t_{in}$ the position of the shadow wave can be calculated from \eqref{caftertin} and the mass from \eqref{sigmaaftertin} by plugging in the constants given in \eqref{constantsC}--\eqref{constantsE}.
The mass at zero is identically zero, $m_0(t)\equiv 0$.

\begin{figure}[htb]
\centering
\begin{tikzpicture}[scale=0.9]
\draw[->] (0,0) -- (6,0) node[anchor=north] {$r$};
\draw[->] (0,0) -- (0,6) node[anchor=east] {$t$};
\draw	(1,0) node[anchor=north] {$\rho_l$, $u_l$}
		(2,0) node[anchor=north] {}
		(5,0) node[anchor=north] {$\rho_r$, $u_r$};
\draw   (0,5.5) node[anchor=east] {$t_{sw0}$};
\draw[->, dashed] (5,0)--(3,1);
\draw[->, dashed] (1,0)--(1.715,1);
\draw[dotted] (0,0) -- (2.5,3.5);
\draw[dotted] (2,0) -- (2.5,3.5);
\draw[dotted] (2.5,3.5) .. controls (2.8,4) and (1.5,5) .. (0,5.5);
\draw (4.5,3) node[anchor=north] {$\rho_r$,$u_r$}
      (1.6,1.5) node[anchor=north] {$\rho_l$,$u_l$}
      (0.6,3) node[anchor=north] {$\rho=0$}
      (2.3,2) node[anchor=north] {\text{SW}}
      (1.7,5) node[anchor=north] {\text{SW}};
\end{tikzpicture}
\caption{Shadow wave, $u_r<0<u_l$}
\end{figure}
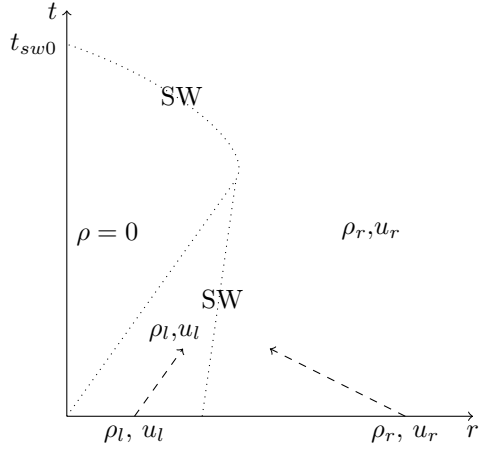
If $u_r<0$ then the shadow wave will eventually reach $r=0$, no matter whether $v_0$ is positive or negative. Figure~7 shows the case when $v_0>0$.
The shadow wave will reach $r=0$ at time $t_{sw0}$. Until this time the position of the shadow wave and the mass content can be calculated in exactly the same way as in the above case.
Now $c(t)$ from \eqref{caftertin} will have a zero and this determines $t_{sw0}$. The explicit formula for $t_{sw0}$ is rather complicated and thus we omit it. The mass at zero will initially be zero,
then at $t_{sw0}$ it will increase instantaneously by the amount of mass in the shadow wave $\sigma(t_{sw0})$.
For $t>t_{sw0}$ the mass at zero will grow linearly with the continuing influx $(\rho_r,u_r)$.

Finally, the remaining cases concern initial pseudo-Riemann data such that $\rho_l > 0$, $\rho_r = 0$, $u_r<u_l$ or $\rho_l = 0$, $\rho_r > 0$, $u_r<u_l$. The first case leads to vacuum states emanating from $\vec{x} = 0$ and $|\vec{x}| = R$ with velocity $v_0 = u_l$ (see the arguments in the proof of Proposition \ref{SWChoiceOfRoot}). The second case leads to a vacuum state bounded by a shock with velocity $v_0 = u_r$. If $u_r < 0$, mass will start accumulating at $\vec{x} = 0$ after time $t = -R/u_r$. The amount of mass can be computed from formula \eqref{boundcond}.

\subsection {Non-entropic solutions for pseudo-Riemann data}
For completeness we give an explicit example of a shadow wave solution for pseudo-Riemann initial data with nonconstant speed
$\dot c(t)$ in dimension two.
Such a solution cannot satisfy the entropy condition and be physical at the same time, as we showed earlier.
While construction of non-physical solutions is simple, the construction of a global, physical non-entropy solution is somewhat more difficult.

Consider Equation \eqref{EulerNdRad} in two dimensions and with initial data
\[
\rho(r,0)=\left\{\begin{aligned}
&0 & &r\leq1 \\
&1/r&  &r>1
\end{aligned}\right. \quad\text{and}\quad
u(r,0)=\left\{\begin{aligned}
&-1 & &r\leq1\\
&0&  &r>1\,.
\end{aligned}\right.
\]
We fix the position $r(t) = R + c(t)$ of the shadow wave by setting
\[ R=1 \text{ and }c(t)=\frac{t}{\sqrt{t+1}}\text{ , hence }r(t)=1+\frac{t}{\sqrt{t+1}}\]
and also the mass in the shadow wave as
\[
\sigma(t)=\frac{c(t)}{2r(t)}=\frac{t}{2(t+\sqrt{t+1})}\ ,
\]
which is $0$ for $t=0$ but becomes positive during evolution.
Since $r(t)$ is monotonically increasing and $u_r=0$ we know the right-hand side data at the position of the shadow wave. It is just $\rho_r=1/r(t)$ and $u_r=0$.
From the equations \eqref{equ:sw:def1}, \eqref{equ:sw:def2} for $n=2$ and $\ddot c\neq0$ we have
\begin{align*}
&\dot \sigma+\tfrac{1}{R+c(t)}\dot c \sigma =\kappa_1=\dot c[\rho]-[\rho u]\,,\\
&\sigma\ddot c+\dot c\kappa_1=\kappa_2=\dot c[\rho u]-[\rho u^2]\,.
\end{align*}
This system can be solved for $\rho_l$ and $u_l$, resulting in
\[
\rho_l(t)=\frac{(t+2)^2\left(t\sqrt{t+1}-t-1\right)}{2(t^2-t-1)(t^2+4t+8)}\quad\text{and}\quad u_l(t)=-\frac{2}{(t+1)^{3/2}(t+2)}\,.
\]
Note that, for positive $t$, $\rho_l$ is nonnegative and regular ($t=(1+\sqrt{5})/2$ is a common root of numerator and denominator), and $u_l$ is negative and increasing.
Thus the solution to the left of the location of the shadow wave can be constructed, e.g., by the method of characteristics. The complete solution is depicted in Figure~8.
\begin{figure}[htb]
\centering
\begin{tikzpicture}[scale=0.9]
\draw[->] (0,0) -- (6,0) node[anchor=north] {$r$};
\draw[->] (0,0) -- (0,6) node[anchor=east] {$t$};
\draw (1,1pt)--(1,-3pt) node[anchor=north] {$1$};
\draw	(0.2,0) node[anchor=north] {$\rho_0$, $u_0$}
		(2,0) node[anchor=north] {}
		(4,0) node[anchor=north] {$\rho_1$, $u_1$};
\draw[->, dashed] (3,0)--(3,1);
\draw[->, dashed] (4,0)--(4,1);
\draw[->, dashed] (5,0)--(5,1);
\draw[->, dashed] (0.5,0)--(0,0.5);
 \draw[scale=1,domain=0:6,dotted,variable=\x] plot ({\x/(\x+1)^(1/2)+1},{\x});
\draw[->, dashed] (1.40825,0.5)--(0.97278,1.5);
\draw[->, dashed] (1.707,1)--(1.471,2);
\draw[->, dashed] (2.1547,2)--(2.0585,3);
\draw[->, dashed] (2.5,3)--(2.45,4);
\draw[->, dashed] (2.7889,4)--(2.7591,5);
\draw[dotted] (1,0) -- (0,1);
\draw 
      (3.2,5.5) node[anchor=north] {\text{SW}};
\end{tikzpicture}
\caption{Physical shadow wave solution that does not satisfy the entropy condition}
\end{figure}
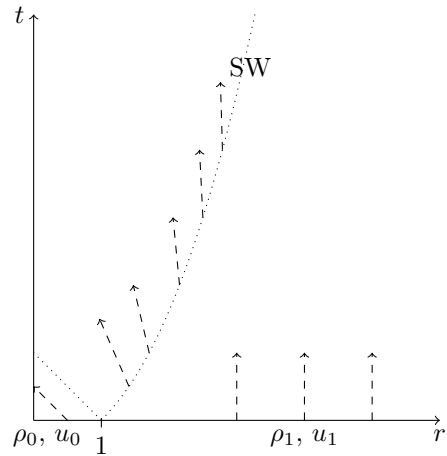

{\sc Remark.}
(a) Integrating the mass arriving at zero over time gives a complete solution; we omit the tedious calculations as the example is intended as an illustration.

(b) The shadow wave solution will not change its shape if $\rho(r,0)$ and $u(r,0)$ are chosen differently for $r<1$ as long as $u(r,0)$ is monotonically increasing in the interval $r\in]0,1[$ and $u(1,0)\leq -1$.


\begin{thebibliography}{10}

\bibitem{Chen2003}
G.-Q. Chen, Multidimensional conservation laws: overview, problems, and
  perspective, in: Nonlinear conservation laws and applications, Vol. 153 of
  IMA Vol. Math. Appl., Springer, New York, 2011, pp. 23--72.

\bibitem{Shandarin}
S.~Shandarin, Y.~B. Zeldovich, {The large-scale structure of the universe:
  turbulence, intermittency, structures in a self-gravitating medium}, {Reviews
  of Modern Physics} {61}~({2}) ({1989}) {185--220}.

\bibitem{Vergassola}
M.~Vergassola, B.~Dubrulle, U.~Frisch, A.~Noullez, {Burgers-equation, Devils
  staircases and the mass-distribution for large-scale structures}, {Astronomy
  \& Astrophysics} {289}~({2}) ({1994}) {325--356}.

\bibitem{Weinberg}
D.~H. Weinberg, J.~E. Gunn, {Large-scale structure and the adhesion
  approximation}, {Monthly Notices of the Royal Astronomical Society}
  {247}~({2}) ({1990}) {260--286}.

\bibitem{BrGr}
Y.~Brenier, E.~Grenier, Sticky particles and scalar conservation laws, SIAM J.
  Numer. Anal. 35~(6) (1998) 2317--2328.

\bibitem{E96}
W.~E, Y.~G. Rykov, Y.~G. Sinai, Generalized variational principles, global weak
  solutions and behavior with random initial data for systems of conservation
  laws arising in adhesion particle dynamics, Comm. Math. Phys. 177~(2) (1996)
  349--380.

\bibitem{BoJa}
F.~Bouchut, F.~James, Duality solutions for pressureless gases, monotone scalar
  conservation laws, and uniqueness, Comm. Partial Differential Equations
  24~(11-12) (1999) 2173--2189.

\bibitem{Hu05}
F.~Huang, Weak solution to pressureless type system, Comm. Partial Differential
  Equations 30~(1-3) (2005) 283--304.

\bibitem{HuWang01}
F.~Huang, Z.~Wang, Well posedness for pressureless flow, Comm. Math. Phys.
  222~(1) (2001) 117--146.

\bibitem{NgTu15}
T.~Nguyen, A.~Tudorascu, One-dimensional pressureless gas systems with/without
  viscosity, Comm. Partial Differential Equations 40~(9) (2015) 1619--1665.

\bibitem{Grenier}
E.~Grenier, Existence globale pour le syst\`eme des gaz sans pression, C. R.
  Acad. Sci. Paris S\'er. I Math. 321~(2) (1995) 171--174.

\bibitem{NgTu}
T.~Nguyen, A.~Tudorascu, Pressureless {E}uler/{E}uler-{P}oisson systems via
  adhesion dynamics and scalar conservation laws, SIAM J. Math. Anal. 40~(2)
  (2008) 754--775.

\bibitem{Bo}
L.~Boudin, A solution with bounded expansion rate to the model of viscous
  pressureless gases, SIAM J. Math. Anal. 32~(1) (2000) 172--193.

\bibitem{Chen2011}
G.-Q. Chen, H.~Liu, Formation of {$\delta$}-shocks and vacuum states in the
  vanishing pressure limit of solutions to the {E}uler equations for isentropic
  fluids, SIAM J. Math. Anal. 34~(4) (2003) 925--938.

\bibitem{Cavaletti}
F.~Cavalletti, M.~Sedjro, M.~Westdickenberg, A simple proof of global existence
  for the 1{D} pressureless gas dynamics equations, SIAM J. Math. Anal. 47~(1)
  (2015) 66--79.

\bibitem{Bo94}
F.~Bouchut, On zero pressure gas dynamics, in: Advances in kinetic theory and
  computing, Vol.~22 of Ser. Adv. Math. Appl. Sci., World Sci. Publ., River
  Edge, NJ, 1994, pp. 171--190.

\bibitem{BoJiLi}
F.~Bouchut, S.~Jin, X.~Li, Numerical approximations of pressureless and
  isothermal gas dynamics, SIAM J. Numer. Anal. 41~(1) (2003) 135--158.

\bibitem{BoMa}
L.~Boudin, J.~Mathiaud, A numerical scheme for the one-dimensional pressureless
  gases system, Numer. Methods Partial Differential Equations 28~(6) (2012)
  1729--1746.

\bibitem{Ne10}
M.~Nedeljkov, Shadow waves: entropies and interactions for delta and singular
  shocks, Arch. Ration. Mech. Anal. 197~(2) (2010) 489--537.

\bibitem{Carot}
J.~Carot, B.~O.~J. Tupper, Spherically symmetric magnetohydrodynamics in
  general relativity, Phys. Rev. D (3) 59~(12) (1999) 124017, 8.

\bibitem{Golovin}
S.~V. Golovin, Singular vortex in magnetohydrodynamics, J. Phys. A 38~(20)
  (2005) 4501--4516.

\bibitem{Stute}
M.~Stute, J.~Gracia, N.~Vlahakis, K.~Tsinganos, A.~Mignone, S.~Massaglia, {3D
  simulations of disc winds extending radially self-similar {MHD} models},
  {Monthly Notices of the Royal Astronomical Society} {439}~({4}) ({2014})
  {3641--3648}.

\bibitem{Tsui}
K.~H. Tsui, A self-similar magnetohydrodynamic model for ball lightnings, Phys.
  Plasmas 13~(7) (2006) 072102, 7.

\bibitem{LeFloch07}
P.~G. LeFloch, M.~Westdickenberg, Finite energy solutions to the isentropic
  {E}uler equations with geometric effects, J. Math. Pures Appl. (9) 88~(5)
  (2007) 389--429.

\bibitem{Colombeau}
M.~Colombeau, Weak asymptotic methods for 3-{D} self-gravitating pressureless
  fluids. {A}pplication to the creation and evolution of solar systems from the
  fully nonlinear {E}uler-{P}oisson equations, J. Math. Phys. 56~(6) (2015)
  061506, 20.

\bibitem{Frei}
H.~Freist{\"u}hler, Rotational degeneracy of hyperbolic systems of conservation
  laws, Arch. Rational Mech. Anal. 113~(1) (1990) 39--64.

\bibitem{Jenssen}
H.~K. Jenssen, On radially symmetric solutions to conservation laws, in:
  A.~Bressan, G.-Q. Chen, M.~Lewicka, D.~Wang (Eds.), Nonlinear conservation
  laws and applications, Vol. 153 of IMA Vol. Math. Appl., Springer, New York,
  2011, pp. 331--351.

\bibitem{Hilgers}
M.~G. Hilgers, Nonuniqueness and singular radial solutions of systems of
  conservation laws, Acta Math. Sci. Ser. B Engl. Ed. 32~(1) (2012) 367--379.

\bibitem{Da}
C.~M. Dafermos, Hyperbolic conservation laws in continuum physics, 2nd Edition,
  Vol. 325 of Grundlehren der Mathematischen Wissenschaften, Springer-Verlag,
  Berlin, 2005.
  \end{thebibliography}
\end{document}